\theoremstyle{plain}
\newtheorem{thm}{Theorem}[section]
\newtheorem{lem}[thm]{Lemma}
\theoremstyle{definition}
\newtheorem{defn}[thm]{Definition}
\newtheorem{ex}[thm]{Example}
\newtheorem{rem}[thm]{Remark}
\numberwithin{equation}{section}
\newcommand{\bR}{{\mathbb R}}
\renewcommand{\a}{\alpha}
\renewcommand{\b}{\beta}
\renewcommand{\c}{\gamma}
\renewcommand{\l}{\lambda}
\renewcommand{\phi}{\varphi}
\newcommand{\ol}{\overline}
\renewcommand{\(}{\left(}
\renewcommand{\)}{\right)}
\renewcommand{\[}{\left[}
\renewcommand{\]}{\right]}
\newcommand\ackname{Acknowledgements}
  \newenvironment{acknowledgements}{%
      \titlepage
      \null\vfil
      \@beginparpenalty\@lowpenalty
      \begin{center}%
        \bfseries \ackname
        \@endparpenalty\@M
      \end{center}}%
     {\par\vfil\null\endtitlepage}
  \newenvironment{acknowledgements}{%
      \if@twocolumn
        \section*{\abstractname}%
      \else
        \small
        \begin{center}%
          {\bfseries \ackname\vspace{-.5em}\vspace{\z@}}%
        \end{center}%
        \quotation
      \fi}
      {\if@twocolumn\else\endquotation\fi}
\begin{document}
\title[Perturbed Hammerstein integral equations with deviated arguments]{Nonzero solutions of perturbed Hammerstein integral equations with deviated arguments and applications}
\date{}
%\thanks{This work was partially supported with ``Fondi PRIN (ex40\%)''}

% ----------------------------------------------------------------

\subjclass[2010]{Primary 34K10, secondary 34B10, 34B18.}%
\keywords{Nontrivial solutions, nonlocal boundary conditions, reflections, deviated argument, fixed point index, cone.}%

\author[A. Cabada]{Alberto Cabada$^*$}\thanks{$^*$Partially supported by FEDER and Ministerio de Educaci\'on y Ciencia, Spain, project MTM2010-15314.}
\address{Alberto Cabada, Departamento de An\'alise Ma\-te\-m\'a\-ti\-ca, Facultade de Matem\'aticas, 
Universidade de Santiago de Com\-pos\-te\-la, 15782 Santiago de Compostela, Spain}%
\email{alberto.cabada@usc.es}%

\author[G. Infante]{Gennaro Infante}%$^\diamond$
\address{Gennaro Infante, Dipartimento di Matematica ed Informatica, Universit\`{a} della
Calabria, 87036 Arcavacata di Rende, Cosenza, Italy}%
\email{gennaro.infante@unical.it}%

\author[F. A. F. Tojo]{F. Adri\'an F. Tojo$^\dagger$}\thanks{$^\dagger$Supported by  FPU scholarship, Ministerio de Educaci\'on, Cultura y Deporte, Spain.}
\address{F. Adri\'an F. Tojo, Departamento de An\'alise Ma\-te\-m\'a\-ti\-ca, Facultade de Matem\'aticas,
Universidade de Santiago de Com\-pos\-te\-la, 15782 Santiago de Compostela, Spain}%
\email{fernandoadrian.fernandez@usc.es}%

\begin{abstract}
We provide a theory to establish the existence of nonzero solutions of perturbed Hammerstein integral equations with deviated arguments, being our main ingredient the theory of fixed point index.
Our approach is fairly general and covers a variety of cases.
We apply our results to a periodic boundary value problem with reflections and to a thermostat problem. In the case of reflections we also discuss the optimality of some constants that occur in our theory. Some examples are presented to illustrate the theory.
\end{abstract}

\maketitle
\section{Introduction}
The existence of solutions of boundary value problems (BVPs) with deviated arguments has been investigated recently by a number of authors using the upper and lower solutions method \cite{rub-rod-lms}, monotone iterative methods \cite{Jan1, Jan2, Sza1, Sza2}\footnote{The tight relationship between the monotone iterative method and the upper and lower solutions method has been highlighted in \cite{Cab95}. Therefore, to make a difference between them is mostly a convention.}, the classic Avery-Peterson Theorem \cite{Jan3, Jan4, Jan5, Jan6} or, in the special case of reflections, the classical fixed point index~\cite{ac-gi-at-bvp}.
One motivation for studying these problems is that they often arise when dealing with real world problems, for example when modelling the stationary distribution of the temperature of a wire of length one which is bended, see the recent paper by Figueroa and Pouso~\cite{rub-rod-lms} for details. Most of the works above mentioned are devoted to the study of \emph{positive} solutions, while in this paper we focus our attention on the existence of \emph{non-trivial} solutions. 
In particular we show how the fixed point index theory can be utilized to develop a theory for the existence of multiple non-zero solutions for
a class of 
perturbed Hammerstein integral equations with deviated arguments of the form
\begin{equation*}\label{eqHam}
u(t)=\gamma(t)\alpha[u]+\int_{a}^{b} k(t,s)g(s)f(s,u(s),u(\sigma(s)))\,ds,\quad t\in[a,b],
\end{equation*}
where 
 $\alpha[u]$ is a linear functional on $C[a,b]$ given
by
\begin{equation*}
\label{eqalpha}
\alpha[u]=\int_{a}^{b} u(s)\,dA(s),
\end{equation*}
involving a Stieltjes integral with a \emph{signed} measure, that is,
$A$ has bounded variation. 

Here $\sigma$ is a continuous function such that $\sigma([a,b])\subseteq [a,b].$
We point out that when $\sigma (t)=a+b-t$ this type of perturbed Hammerstein integral equation is well-suited to treat problems with reflections. 
Differential equations with reflection of the argument have been subject to a growing interest along the years, see for example the papers \cite{Aft, And, alb-adr, alb-adr2, alb-adr3, ac-gi-at-bvp, Gup, Gup2, Ma, Ore, Ore2, Pia, Pia2, Pia3, Wie1} and references therein. We apply our theory to prove the existence of nontrivial solutions of the first order functional periodic boundary value problem
\begin{equation}\label{eqgenpro}
u'(t)  =h(t,u(t),u(-t)),\,  t\in [-T,T];\ 
u(-T)-u(T)=\a[u],
\end{equation}
which generalises the boundary conditions in \cite{alb-adr, ac-gi-at-bvp} by adding a nonlocal term. The formulation of the nonlocal boundary conditions in terms of linear functionals is fairly general and includes, as special cases, multi-point and integral conditions, namely
$$\a[u]=\sum_{j=1}^m\a_j u(\eta_j)\quad\text{or}\quad \a[u]=\int_{0}^1\phi(s)u(s)ds.$$
The study of multi-point problems has been initiated by 1908 by Picone~\cite{Picone} and continued by a number of authors. For an introduction to nonlocal problems we refer to the reviews of Whyburn~\cite{Whyburn}, Conti~\cite{Conti}, Ma~\cite{rma}, Ntouyas~\cite{sotiris} and \v{S}tikonas~\cite{Stik} and to the papers~\cite{kttmna, ktejde, jwgi-lms}.

We also prove for the BVP \eqref{eqgenpro} the optimality of some constants that occur in our theory, improving the results even for the local case, studied in \cite{ac-gi-at-bvp}.

We study as well the existence of non-trivial solutions of the BVP
\begin{equation}\label{eq31}
u''(t)+g(t)f(t,u(t),u(\sigma(t)))=0,\ t \in (0,1),
\end{equation}
\begin{equation}\label{eq31bc}
u'(0)+{\alpha}[u]=0,\; \beta u'(1) + u(\eta)=0,\; {\eta}\in [0,1].
\end{equation}

This type of problems arises when modelling the problem of a cooling or heating system controlled by a thermostat, something that has been studied in several papers, for instance~\cite{Bro, Cam, Fri}. Nonlocal heat flow problems of the type \eqref{eq31}-\eqref{eq31bc} were studied, without the presence of deviated arguments, by Infante and Webb in~\cite{gijwnodea}, who were motivated by the previous work of Guidotti and Merino~\cite{guimer}. This study continued in a series of papers, see~\cite{Fan-Ma, gi-poit, gi-caa, gijwems, Kar-Pal, pp-gi-pp-aml-06, jwpomona, jwwcna04, jw-narwa} and references therein. The case of deviating arguments has been the subject of a recent paper by Figueroa and Pouso, see~\cite{rub-rod-lms}. In Section 4 we describe with more details the physical interpretation of the BVP~\eqref{eq31}-\eqref{eq31bc}.

We stress that the existence of nontrivial solutions of perturbed Hammerstein integral equations, without the presence of deviated arguments, namely
\begin{equation}\label{eqHam2}
u(t)=\gamma(t)\hat{\alpha}[u]+\int_{a}^{b} k(t,s)f(s,u(s))\,ds,
\end{equation}
where $\hat{\alpha}[\cdot]$ is an \emph{affine} functional given by a \emph{positive} measure, have been investigated in \cite{gijwems}, also by means of fixed point index. We make use of ideas from~\cite{gijwems} paper, but our results are somewhat different and complementary in the case of undeviated arguments.

We work in the space $C[a,b]$ of continuous functions endowed with
the usual supremum norm, and use the well-known classical
fixed point index for compact maps, we refer to the review of Amann~\cite{amann} and to the book of Guo and Lakshmikantham~\cite{guolak}
for further information.

\section{On a class of perturbed Hammerstein integral equations}
We impose the following conditions on $k$, $f$, $g$, $\c$, $\a$, $\sigma$ that occur in the integral equation
\begin{equation}\label{eqhamm}
u(t)=\gamma(t)\alpha[u]+\int_{a}^{b} k(t,s)g(s)f(s,u(s),u(\sigma(s)))\,ds =:Fu(t).
\end{equation}
\begin{enumerate}
\item [$(C_{1})$] The kernel $k$ is measurable, and for every $\tau\in
[a,b]$ we have
\begin{equation*}
\lim_{t \to \tau} |k(t,s)-k(\tau,s)|=0 \;\text{ for almost every (a.\,e.) } s \in
[a,b].
\end{equation*}{}
\item [$(C_{2})$]
 There exist a subinterval $[\hat{a},\hat{b}] \subseteq [a,b]$, a measurable function
$\Phi$ with $\Phi \geq 0$ a.\,e. in $[a,b]$ and a constant $c_1=c_1(\hat{a},\hat{b}) \in (0,1]$ such that
\begin{align*}
|k(t,s)|\leq \Phi(s) \text{ for  all }  &t \in [a,b] \text{ and a.\,e. } \, s\in [a,b],\\
k(t,s) \geq c_1\,\Phi(s) \text{ for  all } &t\in [\hat{a},\hat{b}] \text{ and a.\,e. } \, s \in [a,b].
\end{align*}{}
\item[$(C_{3})$]{}
$A$ is of bounded variation and
$\mathcal{K}_A(s):=\int_{{a}}^{{b}} k(t,s)dA(t) \geq 0
\;\text{for a.\,e.}\; s\;  \in [a,b]$.
\item [ $(C_{4})$] The function $g$ satisfies that $g\,\Phi \in L^1[a,b]$,  $g(t) \geq 0$ a.\,e. $t \in [a,b]$
and $\int_{\hat{a}}^{\hat{b}} \Phi(s)g(s)\,ds >0$.{}
\item [ $(C_{5})$]
$ 0\not\equiv \gamma \in C[a,b],\; 0 \leq \alpha[\gamma] <1\;
\text{and there exists}\; c_{2} \in(0,1] \;\text{such that}\;
\gamma(t) \geq c_{2}\|\gamma\| \;\text{for all }\; t \in [\hat{a},\hat{b}]$.
\item  [ $(C_{6})$] The nonlinearity $f:[a,b]\times (-\infty,\infty) \times  (-\infty,\infty)
 \to [0,\infty)$ satisfies Carath\'{e}odory
conditions, that is, $f(\cdot,u,v)$ is measurable for each fixed
$u$ and $v$ in $\bR$, $f(t,\cdot,\cdot)$ is continuous for a.\,e. $t\in [a,b]$, and for each $r>0$, there exists $\phi_{r} \in
L^{\infty}[a,b]$ such that{}
\begin{equation*}
f(t,u,v)\le \phi_{r}(t) \;\text{ for all } \; (u,v)\in
[-r,r]\times [-r,r],\;\text{ and a.\,e. } \; t\in [a,b].
\end{equation*}{}
\item [ $(C_{7})$]  The function $\sigma:[a,b]\to [a,b]$ is continuous.{}
\end{enumerate}
We recall that a cone $K$ in a Banach space $X$  is a closed
convex set such that $\lambda \, x\in K$ for $x \in K$ and
$\lambda\geq 0$ and $K\cap (-K)=\{0\}$.
Here we work in the cone
\begin{equation*}\label{eqcone-cs}
K=\{u\in C[a,b]: \min_{t \in [\hat{a},\hat{b}]}u(t)\geq c \|u\|,\; \alpha[u]\geq 0\},
\end{equation*}
where $c=\min\{c_1,c_2\}$ and $c_1$ and $c_2$ are given in (C2) and (C5) respectively. Note that, from (C5), $K \neq \{0\}$ since $0\ne\c\in K$ and
$$K=K_0 \cap\{u\in C[a,b]: \alpha[u] \geq 0\}, \;\text{where} \; K_{0}=\{u\in C[a,b]: \min_{t \in [\hat{a},\hat{b}]}u(t)\geq c \|u\|\}.$$

The cone $K_0$ has been essentially introduced by Infante and Webb in \cite{gijwjiea}
and later used in~\cite{ac-gi-at-bvp, dfgior1, dfgior2, Fan-Ma, giems, gi-pp1, gi-pp, gijwjmaa, gijwnodea, gijwems, nietopim}. $K_0$ is similar to
a type of cone of \emph{non-negative} functions
 first used by Krasnosel'ski\u\i{}, see e.g. \cite{krzab}, and D.~Guo, see e.g. \cite{guolak}.
Note that functions in $K_0$ are positive on the
subset $[\hat{a},\hat{b}]$ but are allowed to change sign in $[a,b]$. The cone $K$ is a modification of a cone of positive functions introduced in \cite{jwgi-nodea-08}, that allows the use of signed measures.

We require some knowledge of the classical fixed point index for
compact maps, see for example \cite{amann} or \cite{guolak} for
further information.
If $\Omega$ is a bounded open subset of $K$ (in the relative
topology) we denote by $\overline{\Omega}$ and $\partial \Omega$
the closure and the boundary relative to $K$. When $D$ is an open
bounded subset of $X$ we write $D_K=D \cap K$, an open subset of
$K$.

 Next Lemma summarises some classical results regarding the fixed point index (cf. \cite{guolak}).
\begin{lem} \label{lemind}
Let $D$ be an open bounded set with $0\in D_{K}$ and
$\overline{D}_{K}\ne K$. Assume that $F:\overline{D}_{K}\to K$ is
a compact map such that $x\neq Fx$ for all $x\in \partial D_{K}$. Then
the fixed point index $i_{K}(F, D_{K})$ has the following properties.
\begin{itemize}
\item[(1)] If there
exists $e\in K\setminus \{0\}$ such that $x\neq Fx+\lambda e$ for
all $x\in \partial D_K$ and all $\lambda
>0$, then $i_{K}(F, D_{K})=0$.
\item[(2)] If 
$\mu x \neq Fx$
for all $x\in
\partial D_K$ and for every $\mu \geq 1$, then $i_{K}(F, D_{K})=1$.
\item[(3)] If $i_K(F,D_K)\ne0$, then $F$ has a fixed point in $D_K$.
\item[(4)] Let $D^{1}$ be open in $X$ with
$\overline{D^{1}}\subset D_K$. If $i_{K}(F, D_{K})=1$ and
$i_{K}(F, D_{K}^{1})=0$, then $F$ has a fixed point in
$D_{K}\setminus \overline{D_{K}^{1}}$. The same result holds if
$i_{K}(F, D_{K})=0$ and $i_{K}(F, D_{K}^{1})=1$.

\end{itemize}
\end{lem}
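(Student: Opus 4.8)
The plan is to obtain all four items from the standard axiomatic properties of the fixed point index for compact maps on a cone---normalization, additivity (excision), homotopy invariance and the solution property---as found in \cite{amann,guolak}. Each statement then reduces either to producing an admissible compact homotopy (one with no fixed point on $\partial D_K$ throughout the deformation) or to a single use of additivity. I would begin with (3), which is just the solution property: if $F$ had no fixed point in $D_K$, then, since it has none on $\partial D_K$ by hypothesis either, additivity applied to the empty subset forces $i_K(F,D_K)=0$; the contrapositive is exactly (3), and it is this item that will be invoked to extract fixed points in (4).

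To prove (2) I would consider the homotopy $H(t,x)=t\,Fx$ for $t\in[0,1]$. The hypothesis $\mu x\neq Fx$ for every $\mu\geq1$ makes $H$ admissible: a fixed point $x=t\,Fx$ on $\partial D_K$ with $t\in(0,1]$ would give $Fx=(1/t)x$ with $1/t\geq1$, contradicting the assumption, while $t=0$ would force $x=0$, which does not lie on $\partial D_K$. Homotopy invariance then gives $i_K(F,D_K)=i_K(0,D_K)$, and normalization (the constant map with value $0\in D_K$) yields $i_K(0,D_K)=1$.

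For (1) I would instead push fixed points out of $\overline{D}_K$ using the homotopy $H(t,x)=Fx+t\,e$ for $t\geq0$. Admissibility on $\partial D_K$ holds for every $t\geq0$: for $t>0$ it is precisely the hypothesis $x\neq Fx+\lambda e$, and for $t=0$ it is the standing assumption $x\neq Fx$. Since $\overline{D}_K$ is bounded and $F$ is compact, the set $\{\,\|x-Fx\|:x\in\overline{D}_K\,\}$ is bounded, so for $t$ large enough the equation $x=Fx+t\,e$ has no solution in $\overline{D}_K$ at all; hence $i_K(H(t,\cdot),D_K)=0$ for such $t$, and homotopy invariance propagates this back to $i_K(F,D_K)=0$.

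Finally, (4) is a direct consequence of additivity together with (3). As $\overline{D^{1}}\subset D_K$ and $F$ is fixed-point-free on the boundaries involved, additivity gives $i_K(F,D_K)=i_K(F,D_K^{1})+i_K(F,D_K\setminus\overline{D_K^{1}})$; substituting the two hypothesized values ($1$ and $0$, in either order) forces the index over the annular region $D_K\setminus\overline{D_K^{1}}$ to equal $\pm1\neq0$, and (3) then supplies a fixed point there. The only genuinely delicate points---and hence where I would spend the most care---are the verifications that each deformation stays fixed-point-free on $\partial D_K$ for \emph{all} parameter values, in particular the endpoint cases and the compactness/boundedness estimate that makes the large-$t$ map in (1) fixed-point-free on all of $\overline{D}_K$.
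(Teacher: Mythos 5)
Your argument is correct, but note that the paper does not prove this lemma at all: it is stated as a summary of classical results and justified only by citation to \cite{guolak} (and \cite{amann}), so there is no in-paper proof to compare against. What you give is the standard derivation of all four items from the axioms of the fixed point index (normalization, additivity, homotopy invariance, solution property): the contraction homotopy $t\,Fx$ for (2), the translation homotopy $Fx+te$ pushed to large $t$ via the boundedness of $\{\|x-Fx\|:x\in\overline{D}_K\}$ for (1), the empty-set case of additivity for (3), and additivity over $D^1_K$ and $D_K\setminus\overline{D^1_K}$ combined with (3) for (4); all of these are sound, and both homotopies do stay in $K$ since $K$ is a convex cone. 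One point worth making explicit: your proof of (2) uses $0\in D_K$ twice (admissibility of the homotopy at $t=0$, and normalization of the constant map $0$), a hypothesis the lemma as printed omits but which is genuinely needed --- without it (2) is false, e.g.\ for the zero map on an annular region avoiding the origin --- and which holds in all of the paper's applications, where $D_K$ is $K_\rho$ or $V_\rho$. You correctly identified where that assumption enters, which is the only delicate spot in an otherwise routine verification.
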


\begin{defn}
Let us define the following sets for every $\rho>0$: $$K_{\rho}=\{u\in K: \|u\|<\rho\},\ 
V_\rho=\{u \in K: \displaystyle{\min_{t\in [\hat{a},\hat{b}]}}u(t)<\rho \}.$$ The set $V_\rho$ was introduced in \cite{gijwems}
and is equal to the set
called $\Omega_{\rho /c}$ in \cite{gijwjmaa}. The
notation $V_\rho$ makes it clear that
choosing $c$ as large as possible yields a weaker condition to be
satisfied by $f$ in the forthcoming Lemma \ref{idx0b1}.
A key feature 
of these sets is that they can be
nested, that is
$$
K_{\rho}\subset V_{\rho}\subset K_{\rho/c}.
$$
\end{defn}
\begin{thm}\label{thmk}
Assume that  hypotheses $(C_{1})$-$(C_{7})$ hold. Then, for every $r$, $F$ maps
$K_r$ into $K$ and is compact. Moreover $F:K\to K$ and is compact. 
\end{thm}
\begin{proof}
For $u\in
\overline{K}_{r}\text{ and }t \in [a,b]$ we have,
\begin{align*}
|Fu(t)|& \leq  |\gamma(t)|\alpha[u]+\int_{a}^{b}
|k(t,s)|g(s)f(s,u(s),u(\sigma(s)))\,ds\\
&\leq   \alpha[u]\|\gamma \|+ \int_{a}^{b} \Phi(s)g(s)f(s,u(s),u(\sigma(s)))\,ds.
\end{align*}
Taking the supremum on $t\in[a,b]$ we get
$$\|Fu\|\le\alpha[u]\|\gamma \|+ \int_{a}^{b} \Phi(s)g(s)f(s,u(s),u(\sigma(s)))\,ds$$
and, combining this fact with (C2) and (C5),
$$
 \min_{t\in [\hat{a},\hat{b}]}Fu(t) \geq c_2 \alpha[u]\|\gamma \|+c_1
\int_{a}^{b} \Phi(s)g(s)f(s,u(s),u(\sigma(s)))\,ds\geq c\|Fu\|.
$$
Furthermore, by (C3) and (C5),
$$
\alpha[Fu]=\alpha[\gamma]\alpha[u]+
\int_{a}^{b} \mathcal{K}_{A}(s)g(s)f(s,u(s),u(\sigma(s)))\,ds\geq 0.
$$

Therefore we have that $Fu\in K$ for every $u\in
\overline{K}_{r}$.

The compactness of $F$ follows from the fact that the perturbation $\gamma(t)\alpha[u]$ is compact
(since it maps a bounded set into a bounded subset of a one
dimensional space) and the fact that the Hammerstein integral operator that occurs in \eqref{eqhamm} is compact (this a consequence 
of Proposition 3.1 of Chapter 5
of~\cite{martin}). 
\end{proof}
 In the sequel, we give a condition that ensures that, for a suitable $\rho>0$, the index is 1 on $K_{\rho}$.
\begin{lem}
\label{ind1b} Assume that 
\begin{enumerate}
\item[$(\mathrm{I}_{\protect\rho }^{1})$] \label{EqB} there exists $\rho> 0$ such that
$$
 f^{-\rho,\rho}\cdot\sup_{t\in [a,b]} \left\{ \frac{|\gamma(t)|}{1-\alpha[\gamma]}\int_{a}^{b}
\mathcal{K}_{A}(s)g(s)\,ds+\int_{a}^{b}|k(t,s)|g(s)\,ds \right\}
 <1
$$
where
$$
  f^{{-\rho},{\rho}}:=\sup \left\{\frac{f(t,u,v)}{\rho }:\;(t,u,v)\in
[ a,b]\times [ -\rho,\rho ]\times [-\rho,\rho ]\right\}.$$ \end{enumerate}{}
Then the fixed point index, $i_{K}(F,K_{\rho})$, is equal to 1.
\end{lem}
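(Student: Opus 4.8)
The plan is to invoke part (2) of Lemma~\ref{lemind} with $D_K=K_\rho$: it suffices to show that $\mu u\neq Fu$ for every $u\in\partial K_\rho$ (that is, every $u\in K$ with $\|u\|=\rho$) and every $\mu\geq 1$. So I would argue by contradiction, assuming there exist such a $u$ and such a $\mu$ with $\mu u=Fu$, and then use the hypothesis $(\mathrm{I}_\rho^1)$ to force $\mu\|u\|<\|u\|$, contradicting $\mu\geq 1$.

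The step I expect to be the crux is controlling the nonlocal term $\alpha[u]$, since it appears implicitly on both sides once we write out $\mu u=Fu$. My idea is to apply the functional $\alpha$ to the identity $\mu u=Fu$. Using $(C_3)$, namely $\alpha[Fu]=\alpha[\gamma]\alpha[u]+\int_a^b\mathcal{K}_A(s)g(s)f(s,u(s),u(\sigma(s)))\,ds$, this yields $(\mu-\alpha[\gamma])\alpha[u]=\int_a^b\mathcal{K}_A(s)g(s)f(\cdots)\,ds$. Since $0\leq\alpha[\gamma]<1$ by $(C_5)$ and $\mu\geq 1$, we have $\mu-\alpha[\gamma]\geq 1-\alpha[\gamma]>0$; moreover the right-hand integral is nonnegative because $\mathcal{K}_A\geq 0$ by $(C_3)$, $g\geq 0$ by $(C_4)$ and $f\geq 0$ by $(C_6)$. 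Dividing therefore gives the clean upper bound $\alpha[u]\leq\frac{1}{1-\alpha[\gamma]}\int_a^b\mathcal{K}_A(s)g(s)f(\cdots)\,ds$, which is exactly the shape appearing in $(\mathrm{I}_\rho^1)$.

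Next I would estimate $|Fu(t)|$ pointwise. Taking absolute values in $\mu u(t)=Fu(t)$ and using $(C_2)$ to bound $|k(t,s)|\leq\Phi(s)$ is not even needed at this stage—what matters is $\mu|u(t)|\leq|\gamma(t)|\alpha[u]+\int_a^b|k(t,s)|g(s)f(\cdots)\,ds$. Substituting the bound on $\alpha[u]$ obtained above, and then using that $\|u\|=\rho$ forces $(u(s),u(\sigma(s)))\in[-\rho,\rho]\times[-\rho,\rho]$ so that $f(s,u(s),u(\sigma(s)))\leq\rho\,f^{-\rho,\rho}$ by the definition of $f^{-\rho,\rho}$, I obtain for each $t$ the estimate $\mu|u(t)|\leq\rho\,f^{-\rho,\rho}\bigl(\tfrac{|\gamma(t)|}{1-\alpha[\gamma]}\int_a^b\mathcal{K}_A(s)g(s)\,ds+\int_a^b|k(t,s)|g(s)\,ds\bigr)$.

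Finally I would take the supremum over $t\in[a,b]$. This produces $\mu\|u\|\leq\rho\,f^{-\rho,\rho}\sup_{t\in[a,b]}\{\cdots\}$, whose right-hand side is strictly less than $\rho$ precisely by hypothesis $(\mathrm{I}_\rho^1)$. Since $\|u\|=\rho$ and $\mu\geq 1$, the left-hand side is at least $\rho$, giving $\rho\leq\mu\rho<\rho$, the desired contradiction. Hence $\mu u\neq Fu$ on $\partial K_\rho$ for all $\mu\geq 1$, and Lemma~\ref{lemind}(2) yields $i_K(F,K_\rho)=1$. The only genuinely delicate point is the manipulation of $\alpha[u]$; once the sign conditions $(C_3)$ and $(C_5)$ are exploited to dominate $\mu-\alpha[\gamma]$ from below by $1-\alpha[\gamma]$, the remaining estimates are routine.
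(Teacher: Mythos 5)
Your proposal is correct and follows essentially the same route as the paper's proof: contradiction via Lemma~\ref{lemind}(2), applying $\alpha$ to $\mu u=Fu$, using $(C_3)$, $(C_5)$ and $\mu\geq 1$ to get $\mu-\alpha[\gamma]\geq 1-\alpha[\gamma]>0$ and hence the bound on $\alpha[u]$, then the pointwise estimate, the bound $f\leq\rho f^{-\rho,\rho}$, and the supremum over $t$ to reach $\mu\rho<\rho$. The only cosmetic difference is that you bound $\alpha[u]$ by $\frac{1}{1-\alpha[\gamma]}\int_a^b\mathcal{K}_A(s)g(s)f\,ds$ before substituting, whereas the paper substitutes the exact expression with denominator $\mu-\alpha[\gamma]$ and absorbs the inequality at the final display; these are the same estimate in a different order.
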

\begin{proof}
We show that $\mu u \neq Fu$ for every $u \in \partial K_{\rho }$
and for every $\mu \geq 1$. %; this ensures that the index is 1 on $K_{\rho }$.
In fact, if this does not happen, there exist $\mu \geq 1$ and $u\in
\partial K_{\rho }$ such that $\mu u=Fu$,
that is
$$\mu u(t)= \gamma(t)\alpha[u]+\int_{a}^{b}
k(t,s)g(s)f(s,u(s),u(\sigma(s)))\,ds,$$
furthermore, applying $\a$ to both sides of the equation,
$$\mu \alpha[ u]= \alpha[\gamma]\alpha[u]+\int_{a}^{b}
\mathcal{K}_{A}(s)g(s)f(s,u(s),u(\sigma(s)))\,ds,$$
thus,  from $(C_5)$, $\mu-\a[\c]\geq 1-\a[\c]>0$, and we deduce that
$$\alpha[u]=\frac{1}{\mu-\alpha[\gamma]}\int_{a}^{b}
\mathcal{K}_{A}(s)g(s)f(s,u(s),u(\sigma(s)))\,ds$$
and we get, substituting,
$$\mu u(t)= \frac{\gamma(t)}{\mu-\alpha[\gamma]}\int_{a}^{b}
\mathcal{K}_{A}(s)g(s)f(s,u(s),u(\sigma(s)))\,ds+\int_{a}^{b}
k(t,s)g(s)f(s,u(s),u(\sigma(s)))\,ds.$$

Taking the absolute value, and then the supremum for $t\in [a,b]$, gives
\begin{align*}
\mu \rho&\leq 
\sup_{t\in [a,b]} \left\{ \frac{|\gamma(t)|}{1-\alpha[\gamma]}\int_{a}^{b}
\mathcal{K}_{A}(s)g(s)f(s,u(s),u(\sigma(s)))\,ds+\int_{a}^{b}|k(t,s)|g(s)f(s,u(s),u(\sigma(s)))\,ds \right\}\\
 &\leq\rho f^{-\rho,\rho}\cdot\sup_{t\in [a,b]} \left\{ \frac{|\gamma(t)|}{1-\alpha[\gamma]}\int_{a}^{b}
\mathcal{K}_{A}(s)g(s)\,ds+\int_{a}^{b}|k(t,s)|g(s)\,ds \right\}
 <\rho.
\end{align*}

This
contradicts the fact that $\mu \geq 1$ and proves the result.
\end{proof}
\begin{rem}
We point out, in similar way as in \cite{jwgi-nodea-08}, that a stronger (but easier to check) condition than $(\mathrm{I}_{\protect\rho }^{1})$ is given by the following.
\begin{equation}
  \label{eqmest}
 f^{{-\rho},{\rho}} \; \left(\dfrac{\|\gamma\|}{1-\alpha[\gamma]}
\int_{a}^{b} \mathcal{K}_A(s)g(s)\,ds
+\dfrac{1}{m}\right)<1,
\end{equation}
where
\begin{equation}\label{mdef}\frac{1}{m}:=\sup_{t\in [a,b]}\int_{a}^{b}|k(t,s)|g(s)\,ds.\end{equation}
\end{rem}

 Let's see now a condition that warrants that the index is equal to zero on $V_\rho$ for some appropriate $\rho>0$.
\begin{lem}
\label{idx0b1} Assume that

\begin{enumerate}
\item[$(\mathrm{I}_{\protect\rho }^{0})$] there exists $\rho >0$ such that
$$
f_{\rho ,{\rho /c}}\cdot\inf_{t\in [\hat{a},\hat{b}]} \left\{ \frac{\gamma(t)}{1-\alpha[\gamma]}\int_{\hat{a}}^{\hat{b}}
\mathcal{K}_{A}(s)g(s)\,ds+\int_{\hat{a}}^{\hat{b}}k(t,s)g(s)\,ds \right\}
 >1,
$$
where
$$
f_{\rho ,{\rho /c}}: =\inf \left\{\frac{f(t,u,v)}{\rho }%
:\;(t,u,v)\in [\hat{a},\hat{b}]\times [\rho ,\rho /c]\times
[\theta,\rho /c]\right\},$$
\end{enumerate}
and
$$\theta:=\begin{cases} \rho, & \text{if}\quad \sigma([\hat a,\hat b])\subseteq [\hat a,\hat b], \\ -\rho/c, & \text{otherwise}.\end{cases}$$
Then $i_{K}(F,V_{\rho})=0$.
\end{lem}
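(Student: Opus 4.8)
The plan is to apply part~(1) of Lemma~\ref{lemind} with $D_K=V_\rho$. I will produce an element $e\in K\setminus\{0\}$ and show that $u\neq Fu+\lambda e$ for every $u\in\partial V_\rho$ and every $\lambda\ge 0$; taking $\lambda>0$ gives exactly the hypothesis of part~(1), while the case $\lambda=0$ also secures the standing requirement of Lemma~\ref{lemind} that $F$ have no fixed point on $\partial V_\rho$. Hence $i_K(F,V_\rho)=0$ will follow. A convenient choice is $e=\gamma$, which lies in $K\setminus\{0\}$ by $(C_5)$; the only features I use are that $e\in K$ forces $\alpha[e]\ge 0$ and, since $\min_{[\hat{a},\hat{b}]}e\ge c\|e\|\ge 0$, also $e\ge 0$ on $[\hat{a},\hat{b}]$.

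Arguing by contradiction, suppose $u=Fu+\lambda e$ for some $u\in\partial V_\rho$ and some $\lambda\ge 0$. The decisive preliminary step, and the only place where the deviated argument really intervenes, is to locate the arguments of $f$. Because $u\in\partial V_\rho\subset K$, we have $\min_{t\in[\hat{a},\hat{b}]}u(t)=\rho$ and, from $u\in K$, $\rho=\min_{[\hat{a},\hat{b}]}u\ge c\|u\|$, so $\|u\|\le\rho/c$; thus $u(s)\in[\rho,\rho/c]$ for $s\in[\hat{a},\hat{b}]$. The deviated argument $u(\sigma(s))$ is precisely what dictates the definition of $\theta$: if $\sigma([\hat{a},\hat{b}])\subseteq[\hat{a},\hat{b}]$ then $\sigma(s)\in[\hat{a},\hat{b}]$ and $u(\sigma(s))\in[\rho,\rho/c]$, whereas otherwise one can only assert $u(\sigma(s))\in[-\|u\|,\|u\|]\subseteq[-\rho/c,\rho/c]$. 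In either case $(s,u(s),u(\sigma(s)))\in[\hat{a},\hat{b}]\times[\rho,\rho/c]\times[\theta,\rho/c]$, so by the definition of $f_{\rho,\rho/c}$ we obtain $\psi(s):=f(s,u(s),u(\sigma(s)))\ge\rho\,f_{\rho,\rho/c}$ for a.e. $s\in[\hat{a},\hat{b}]$.

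Next I would extract $\alpha[u]$ by applying $\alpha$ to $u=Fu+\lambda e$; using $(C_5)$ (so $1-\alpha[\gamma]>0$) this yields $\alpha[u]=\frac{1}{1-\alpha[\gamma]}\big(\int_a^b\mathcal{K}_A(s)g(s)\psi(s)\,ds+\lambda\alpha[e]\big)$. Since $\mathcal{K}_A\ge 0$ by $(C_3)$, $g\ge 0$, $\psi\ge 0$ and $\lambda\alpha[e]\ge 0$, I may discard the $\lambda$-term and shrink the domain of integration to $[\hat{a},\hat{b}]$ to get a lower bound for $\alpha[u]$. Substituting this into $u(t)=\gamma(t)\alpha[u]+\int_a^b k(t,s)g(s)\psi(s)\,ds+\lambda e(t)$ for $t\in[\hat{a},\hat{b}]$, and using that there $\gamma(t)\ge 0$, $\lambda e(t)\ge 0$ and $k(t,s)\ge c_1\Phi(s)\ge 0$ by $(C_2)$, I drop $\lambda e(t)$ and restrict the second integral to $[\hat{a},\hat{b}]$, reaching
$$u(t)\ge\frac{\gamma(t)}{1-\alpha[\gamma]}\int_{\hat{a}}^{\hat{b}}\mathcal{K}_A(s)g(s)\psi(s)\,ds+\int_{\hat{a}}^{\hat{b}}k(t,s)g(s)\psi(s)\,ds.$$

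Finally I insert $\psi(s)\ge\rho\,f_{\rho,\rho/c}$ from the second paragraph, factor this constant out, and take the infimum over $t\in[\hat{a},\hat{b}]$; hypothesis $(\mathrm{I}_\rho^0)$ then gives $\min_{t\in[\hat{a},\hat{b}]}u(t)>\rho$, contradicting $\min_{t\in[\hat{a},\hat{b}]}u(t)=\rho$ for $u\in\partial V_\rho$. I expect the $\theta$ case analysis to be the only genuinely delicate point, the two cases reflecting whether or not the deviation $\sigma$ keeps $[\hat{a},\hat{b}]$ inside itself; the remaining estimates are the standard positivity and monotonicity bookkeeping on the cone $K$ furnished by $(C_2)$, $(C_3)$ and $(C_5)$.
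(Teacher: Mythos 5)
Your proposal is correct and follows essentially the same route as the paper: choose $e=\gamma$, suppose $u=Fu+\lambda\gamma$ on $\partial V_\rho$, apply $\alpha$ to solve for $\alpha[u]$ using $1-\alpha[\gamma]>0$, discard the nonnegative $\lambda$-terms, restrict the integrals to $[\hat a,\hat b]$, and invoke $(\mathrm{I}_\rho^0)$ to reach $\rho>\rho$. Your explicit justification of where $(s,u(s),u(\sigma(s)))$ lives — and hence of the role of $\theta$ — is a detail the paper leaves implicit, but it is exactly the intended reading.
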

\begin{proof}
Since $0\not\equiv\c\in K$ we can choose $e=\c$ in Lemma \ref{lemind}, so we now prove that
\begin{equation*}
u\ne Fu+\mu \gamma\quad\text{for  all } u\in \partial
V_{\rho}\quad\text{and  every } \mu>0.
\end{equation*}
%this ensures that the index is 0 on $V_{\rho }$.
In fact, if not, there exist $u\in \partial V_\rho$ and
$\mu>0$ such that $u=Fu+\mu \gamma$. Then we have$$
u(t)=\gamma(t)\alpha[u]+\int_{a}^{b}
k(t,s)g(s)f(s,u(s),u(\sigma(s)))\,ds+\mu\gamma(t)
$$
and
$$
\alpha[u]=\alpha[\gamma]\alpha[u]+\int_{a}^{b}
\mathcal{K}_{A}(s)g(s)f(s,u(s),u(\sigma(s)))\,ds+\mu\alpha[\gamma],
$$
and therefore
$$\alpha[u]=\frac{1}{1-\alpha[\gamma]}\int_{a}^{b}
\mathcal{K}_{A}(s)g(s)f(s,u(s),u(\sigma(s)))\,ds+\frac{\mu\alpha[\gamma]}{1-\alpha[\gamma]}.$$
Thus we get, for $t\in
[\hat{a},\hat{b}]$,

\begin{align*}
u(t)=&\frac{\gamma(t)}{1-\alpha[\gamma]}\left(\int_{a}^{b}
\mathcal{K}_{A}(s)g(s)f(s,u(s),u(\sigma(s)))\,ds+\mu\a[\gamma] \right)\\ & +\int_{a}^{b}
k(t,s)g(s)f(s,u(s),u(\sigma(s)))\,ds+\mu\c(t) \\  \ge&
\frac{\gamma(t)}{1-\alpha[\gamma]}\int_{\hat{a}}^{\hat{b}}
\mathcal{K}_{A}(s)g(s)f(s,u(s),u(\sigma(s)))\,ds+\int_{\hat{a}}^{\hat{b}}
k(t,s)g(s)f(s,u(s),u(\sigma(s)))\,ds\\ \ge&
\rho f_{\rho ,{\rho /c}}
\;  \left(
\frac{\gamma(t)}{1-\alpha[\gamma]}\int_{\hat{a}}^{\hat{b}}
\mathcal{K}_{A}(s)g(s)\,ds+\int_{\hat{a}}^{\hat{b}}
k(t,s)g(s)\,ds\right).
\end{align*}
Taking the minimum over $[\hat{a},\hat{b}]$ gives
$\rho>\rho$ a contradiction.
\end{proof}
\begin{rem}\label{easyidx0}
We point out, in similar way as in \cite{jwgi-nodea-08}, that a stronger (but easier to check) condition than $(\mathrm{I}_{\protect\rho }^{0})$ is given by the following.
\begin{equation}
  \label{eqmest2}
f_{\rho ,{\rho /c}}\;  \left(\dfrac{c_2\|\gamma\|}{1-\alpha[\gamma]}
\int_{\hat{a}}^{\hat{b}} \mathcal{K}_A(s)g(s)\,ds
+\dfrac{1}{M(\hat{a},\hat{b})}\right)>1,
\end{equation}
where
\begin{equation}\label{Mdef} 
\frac{1}{M(\hat{a},\hat{b})} :=\inf_{t\in [\hat{a},\hat{b}]}\int_{\hat{a}}^{\hat{b}}k(t,s)g(s)\,ds.
\end{equation}
\end{rem}

\begin{rem}\label{whichone} Depending on the nature of the nonlinearity $f$ and due to the way $\theta$ is defined, sometimes it could be useful to take a smaller $[\hat a, \hat b]$ such that $\sigma([\hat a,\hat b])\subseteq [\hat a,\hat b]$. This fact is illustrated in Section 4.
\end{rem}

The above Lemmas can be combined to prove the following Theorem. Here we
deal with the existence of at least one, two or three solutions. We stress
that, by expanding the lists in conditions $(S_{5}),(S_{6})$ below, it is
possible to state results for four or more positive solutions, see for
example the paper by Lan~\cite{kljdeds} for the type of results that might be stated. We omit
the proof which follows directly from the properties of the fixed point index  stated in Lemma \ref{lemind}, (3) and (4).\par
\begin{thm}\label{thmcasesS} The integral equation \eqref{eqhamm} has at least one non-zero solution
in $K$ if any of the following conditions hold.

\begin{enumerate}

\item[$(S_{1})$] There exist $\rho _{1},\rho _{2}\in (0,\infty )$ with $\rho
_{1}/c<\rho _{2}$ such that $(\mathrm{I}_{\rho _{1}}^{0})$ and $(\mathrm{I}_{\rho _{2}}^{1})$ hold.

\item[$(S_{2})$] There exist $\rho _{1},\rho _{2}\in (0,\infty )$ with $\rho
_{1}<\rho _{2}$ such that $(\mathrm{I}_{\rho _{1}}^{1})$ and $(\mathrm{I}%
_{\rho _{2}}^{0})$ hold.
\end{enumerate}
The integral equation \eqref{eqhamm} has at least two non-zero solutions in $K$ if one of
the following conditions hold.

\begin{enumerate}

\item[$(S_{3})$] There exist $\rho _{1},\rho _{2},\rho _{3}\in (0,\infty )$
with $\rho _{1}/c<\rho _{2}<\rho _{3}$ such that $(\mathrm{I}_{\rho
_{1}}^{0}),$ $(
\mathrm{I}_{\rho _{2}}^{1})$ $\text{and}\;\;(\mathrm{I}_{\rho _{3}}^{0})$
hold.

\item[$(S_{4})$] There exist $\rho _{1},\rho _{2},\rho _{3}\in (0,\infty )$
with $\rho _{1}<\rho _{2}$ and $\rho _{2}/c<\rho _{3}$ such that $(\mathrm{I}%
_{\rho _{1}}^{1}),\;\;(\mathrm{I}_{\rho _{2}}^{0})$ $\text{and}\;\;(\mathrm{I%
}_{\rho _{3}}^{1})$ hold.
\end{enumerate}
The integral equation \eqref{eqhamm} has at least three non-zero solutions in $K$ if one
of the following conditions hold.

\begin{enumerate}
\item[$(S_{5})$] There exist $\rho _{1},\rho _{2},\rho _{3},\rho _{4}\in
(0,\infty )$ with $\rho _{1}/c<\rho _{2}<\rho _{3}$ and $\rho _{3}/c<\rho
_{4}$ such that $(\mathrm{I}_{\rho _{1}}^{0}),$ $(\mathrm{I}_{\rho _{2}}^{1}),\;\;(\mathrm{I}%
_{\rho _{3}}^{0})\;\;\text{and}\;\;(\mathrm{I}_{\rho _{4}}^{1})$ hold.

\item[$(S_{6})$] There exist $\rho _{1},\rho _{2},\rho _{3},\rho _{4}\in
(0,\infty )$ with $\rho _{1}<\rho _{2}$ and $\rho _{2}/c<\rho _{3}<\rho _{4}$
such that $(\mathrm{I}_{\rho _{1}}^{1}),\;\;(\mathrm{I}_{\rho
_{2}}^{0}),\;\;(\mathrm{I}_{\rho _{3}}^{1})$ $\text{and}\;\;(\mathrm{I}%
_{\rho _{4}}^{0})$ hold.
\end{enumerate}
\end{thm}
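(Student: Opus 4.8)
The plan is to derive every case of Theorem \ref{thmcasesS} by feeding the index values supplied by Lemmas \ref{ind1b} and \ref{idx0b1} into the solution-location property of Lemma \ref{lemind}(4). Theorem \ref{thmk} guarantees that $F$ is a compact self-map of each relevant relatively open subset of $K$, so the fixed point index $i_K(F,\cdot)$ is defined throughout; the nesting $K_\rho \subset V_\rho \subset K_{\rho/c}$ will be the bookkeeping device that lets me compare the $K_\rho$-sets, on which $(\mathrm{I}_\rho^1)$ forces index $1$, with the $V_\rho$-sets, on which $(\mathrm{I}_\rho^0)$ forces index $0$. All closures and boundaries below are understood relative to $K$, as fixed in the conventions preceding Lemma \ref{lemind}.

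I would start with $(S_1)$. By Lemma \ref{idx0b1}, $(\mathrm{I}_{\rho_1}^0)$ gives $i_K(F,V_{\rho_1})=0$, and by Lemma \ref{ind1b}, $(\mathrm{I}_{\rho_2}^1)$ gives $i_K(F,K_{\rho_2})=1$. To apply Lemma \ref{lemind}(4) with the pair $V_{\rho_1}\subset K_{\rho_2}$ I must check $\overline{V_{\rho_1}}\subset K_{\rho_2}$, and this is where the cone structure enters: every $u\in K$ satisfies $c\|u\|\le\min_{t\in[\hat a,\hat b]}u(t)\le\|u\|$, so $\overline{V_{\rho_1}}\subseteq\{u\in K:\|u\|\le \rho_1/c\}=\overline{K_{\rho_1/c}}$, and the hypothesis $\rho_1/c<\rho_2$ then yields $\overline{V_{\rho_1}}\subset K_{\rho_2}$. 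Lemma \ref{lemind}(4) therefore produces a fixed point in $K_{\rho_2}\setminus\overline{V_{\rho_1}}$, which is nonzero because $0\in V_{\rho_1}$. The case $(S_2)$ is the mirror image: $(\mathrm{I}_{\rho_1}^1)$ gives index $1$ on $K_{\rho_1}$, $(\mathrm{I}_{\rho_2}^0)$ gives index $0$ on $V_{\rho_2}$, and since $\min_{t\in[\hat a,\hat b]}u(t)\le\|u\|$ one has $\overline{K_{\rho_1}}\subset V_{\rho_2}$ whenever $\rho_1<\rho_2$; Lemma \ref{lemind}(4) then yields a nonzero fixed point in $V_{\rho_2}\setminus\overline{K_{\rho_1}}$.

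The multiplicity statements follow by iterating this argument on consecutive nested pairs. Under $(S_3)$ the ordering $\rho_1/c<\rho_2<\rho_3$ lets me treat the pair $(V_{\rho_1},K_{\rho_2})$ exactly as in $(S_1)$ and the pair $(K_{\rho_2},V_{\rho_3})$ exactly as in $(S_2)$, obtaining one fixed point in $K_{\rho_2}\setminus\overline{V_{\rho_1}}$ and a second in $V_{\rho_3}\setminus\overline{K_{\rho_2}}$; these regions are disjoint, so the solutions are distinct and nonzero. Conditions $(S_4)$, $(S_5)$, $(S_6)$ are handled identically by reading off the alternating $0/1$ pattern of the indices along the chain of nested sets. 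I do not expect a genuine obstacle here, since the argument is a combinatorial assembly of the two index lemmas; the only point demanding care is the verification of the inclusions $\overline{V_\rho}\subset K_{\rho/c}$ and $\overline{K_\rho}\subset V_{\rho'}$, which rests entirely on the cone inequality $c\|u\|\le\min_{t\in[\hat a,\hat b]}u(t)\le\|u\|$ and on the strictness of the inequalities between the successive $\rho_i$ that guarantees each annular region is both nonempty and bounded away from $0$.
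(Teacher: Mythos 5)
Your argument is correct and is exactly the one the paper intends: the paper omits the proof, noting only that it "follows directly from the properties of the fixed point index stated in Lemma \ref{lemind}, (3) and (4)", and your assembly of Lemmas \ref{ind1b} and \ref{idx0b1} via the nesting $K_{\rho}\subset V_{\rho}\subset K_{\rho/c}$ is precisely that omitted argument, with the inclusions $\overline{V_{\rho_1}}\subset K_{\rho_2}$ and $\overline{K_{\rho_1}}\subset V_{\rho_2}$ verified correctly.
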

\begin{rem} A similar approach can be used, depending on the signs of $k$ and $\gamma$, to prove the existence of solutions that are negative on sub-interval, non-positive, strictly negative, non-negative and strictly positive. See for example Remark 3.4 of \cite{gijwems} and also Sections 2, 3 and 4 and Remark 4.5 of \cite{ac-gi-at-bvp}.
\end{rem}

\section{An application to a problem with reflection}
We now turn our attention to the first order functional periodic boundary value problem
\begin{equation}\label{eqgenpro2} 
u'(t)=h(t,u(t),u(-t)),\,  t\in I:=[-T,T],
\end{equation}
\begin{equation}\label{prdic2}
u(-T)-u(T)=\alpha[u],
\end{equation}
where $\alpha$ is a linear functional on $C(I)$ given
by
\begin{equation*}
\label{eqalpha2}
\alpha[u]=\int_{-T}^{T} u(s)\,dA(s),
\end{equation*}
involving a Stieltjes integral with a \emph{signed} measure.

We utilize the shift argument of \cite{alb-adr} (a similar idea has been used in \cite{Tor,jwmz-na}), by
fixing $\omega \in \mathbb{R}\setminus \{0\}$ and considering the  equivalent expression
\begin{equation}\label{eqgenpro2b}
u'(t) +\omega u(-t) =h(t,u(t),u(-t))+\omega u(-t)=:f(t,u(t),u(-t)),\,  t\in I,
\end{equation}
with the BCs
\begin{equation}\label{prdicb}
u(-T)-u(T)=\alpha[u].
\end{equation}{}

The Green's function $k$ of the periodic problem
$$u'(t) +\omega u(-t)=f(t,u(t),u(-t)),\ t\in I,\quad u(T)=u(-T)$$
is given by (see \cite{alb-adr,ac-gi-at-bvp})
\begin{equation*}\label{gbarra}
2\sin(\omega T)k(t,s)=\begin{cases} \cos \omega (T-s-t)+\sin \omega (T+s-t) & \text{if}\quad t>|s|,\\\cos \omega (T-s-t)-\sin \omega (T-s+t) & \text{if}\quad |t|<s,\\
\cos \omega (T+s+t)+\sin \omega(T+s-t) & \text{if}\quad |t|<-s,\\
\cos \omega (T+s+t)-\sin \omega(T-s+t) & \text{if}\quad t<-|s|.\end{cases}
\end{equation*}
Note that $k$ only exists when $\omega T\neq l\pi$ for every $l\in\mathbb{Z}$. Hence, \cite[Corollary 3.4]{alb-adr} guarantees that problem  \eqref{eqgenpro2b} -- \eqref{prdicb} is equivalent to the perturbed Hammerstein integral equation
$$u(t)=k(t,-T)\a[u]+ \int_{-T}^Tk(t,s)f(t,u(t),u(-t))dt.$$
Thus, we are working with an equation of the type \eqref{eqHam2} where $$\c(t)=k(t,-T)=\cos\omega t-\sin\omega t=\sqrt 2\sin\(\frac{\pi}{4}-\omega t\).$$
Let $\zeta:=\omega T$. Then we have
$$\|\c\|=\begin{cases} \sqrt 2\sin\(\frac{\pi}{4}+\zeta\) & \text{if}\quad \zeta\in\(0,\frac{\pi}{4}\), \\  \sqrt 2 & \text{if}\quad \zeta\in\[\frac{\pi}{4},\frac{\pi}{2}\).\end{cases}$$
Also, using Lemma 5.5 in \cite{ac-gi-at-bvp}, the constant $c_2$ is given by
$$\|\c\|c_2=\inf_{t\in[\hat a,\hat b]}\c(t)=\begin{cases}\c(\hat b) & \text{if}\quad \zeta\in\(0,\frac{\pi}{4}\]\quad\text{or}\quad \left|\hat a+\frac{\pi}{4\zeta}\right|<\left|\hat b+\frac{\pi}{4\zeta}\right|, \\ \c(\hat a) & \text{if}\quad \zeta\in\(\frac{\pi}{4},\frac{\pi}{2}\]\ \text{and}\  \left|\hat a+\frac{\pi}{4\zeta}\right|\ge\left|\hat b+\frac{\pi}{4\zeta}\right| .\end{cases}$$
The constant $c_1$ was given in \cite{ac-gi-at-bvp} for the case $\hat a+\hat b=1$ and has the following expression
\begin{equation}\label{eqc1}c_1=\frac{(1-\tan\zeta \hat a)(1-\tan\zeta \hat b)}{(1+\tan\zeta \hat a)(1+\tan\zeta \hat b)}.\end{equation}
Observe that in the case $[\hat a, \hat b]=I$, using the fact that $k(t,s)= k(t+1,s+1)$, $ k(t+1,s)= k(t,s+1)$ for $t,s\in[-T,0]$ (cf. \cite{ac-gi-at-bvp}) and formula \eqref{eqc1} for $[\hat a,\hat b]=[0,T]$ we get that
$$c_1=\frac{1-\tan\zeta}{1+\tan\zeta}=\cot\(\frac{\pi}{4}+\zeta\).$$
Consider now the set $\hat S:=\{(\hat a,\hat b)\in\bR^2\ :\ \hat a<\hat b,\ \text{(C2) is satisfied for}\  [\hat a,\hat b]\}$ and $M(\hat a,\hat b)$ defined as in \eqref{Mdef} (with $g\equiv 1$). Since a smaller constant $M(\hat a,\hat b)$ relaxes the growth conditions imposed on the nonlinearity $f$ by the inequality \eqref{eqmest2},
we turn our attention to the quantity
$$\frac{1}{M_{opt}}:=\sup_{(\hat a,\hat b)\in\hat S}\frac{1}{M(\hat a,\hat b)}.$$
A similar study has been done, in the case of second-order BVPs in \cite{gi-pp, jwpomona, jwwcna04} and for fourth order BVPs in \cite{gipp-cant, paola,jwgi-lms-II}.

Before computing this value, we need some relevant information about the kernel $k$.\par
First, observe that with the change of variables $t=\ol x\, T$, $s=\ol y\, T$, $\ol k(x,y)=k(t,s)$,  $a=\ol a\, T$, $b=\ol b\, T$ we have that
$$\frac{1}{M_{opt}}=T\sup_{(\ol a,\ol b)\in\tilde S}\min_{x\in[\ol a,\ol b]}\int_{\ol a}^{\ol b}\ol k(x,y)dy,$$
where $\tilde S:=\{(\ol a,\ol b)\in\bR^2\ :\ (\ol a\, T,\ol a\, T)\in\hat S\}.$\par
There is a symmetry (see \cite{alb-adr}) between the cases $\omega$ and $-\omega$ given by the fact that $\ol k_\omega(x,y)=-\ol k_{-\omega}(-x,-y)$, so we can restrict our problem to the case $\omega>0$.\par
Information on the sign of $\ol k$ is given in the following Lemma which summarizes the findings in \cite{ac-gi-at-bvp,alb-adr}.
\begin{lem}\label{posstrip} Let $\zeta=\omega T$. The following hold:\par
\begin{enumerate}
\item If $\zeta\in(0,\frac{\pi}{4})$, then $\ol k$ is strictly positive in $I^2$.
\item If $\zeta\in(-\frac{\pi}{4},0)$, then $\ol k$ is strictly negative in $I^2$.
\item If $\zeta\in[\frac{\pi}{4},\frac{\pi}{2})$, then $\ol k$ is strictly positive in $$S:=
\[\(-\frac{\pi}{4|\zeta|},\frac{\pi}{4|\zeta|}-1\)\cup\(1-\frac{\pi}{4|\zeta|},\frac{\pi}{4|\zeta|}\)\]\times[-1,1].$$
\item If $\zeta\in(-\frac{\pi}{2},-\frac{\pi}{4}]$, $\ol k$ is strictly negative in $S$.
\end{enumerate}
\end{lem}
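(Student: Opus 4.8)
The plan is to reduce everything to $\omega>0$ and then read the sign of $\ol k$ off a factored form of the explicit kernel. First I would use the symmetry $\ol k_\omega(x,y)=-\ol k_{-\omega}(-x,-y)$ recorded above. Since the set $S$ depends only on $|\zeta|$ and is invariant under $(x,y)\mapsto(-x,-y)$, this symmetry turns part (1) into part (2) and part (3) into part (4); hence it suffices to treat $\zeta=\omega T>0$, that is parts (1) and (3).

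Next I would pass to the normalized variables $t=\ol x\,T$, $s=\ol y\,T$, under which the branch conditions $t>|s|$, $|t|<s$, $|t|<-s$, $t<-|s|$ become $x>|y|$, $|x|<y$, $|x|<-y$, $x<-|y|$, while $\omega(T-s-t)$, $\omega(T+s-t)$, $\omega(T-s+t)$, $\omega(T+s+t)$ become $\zeta(1-x-y)$, $\zeta(1-x+y)$, $\zeta(1+x-y)$, $\zeta(1+x+y)$. The key manipulation is to collapse each branch of $2\sin\zeta\,\ol k$ into a single product by writing $\cos P\pm\sin Q=\cos P\pm\cos(\tfrac{\pi}{2}-Q)$ and applying the sum-to-product identities; this gives
\begin{equation*}
2\sin\zeta\,\ol k(x,y)=\begin{cases}
2\cos\big(\tfrac{\pi}{4}-\zeta y\big)\cos\big(\zeta(1-x)-\tfrac{\pi}{4}\big), & x>|y|,\\
-2\sin\big(\tfrac{\pi}{4}-\zeta x\big)\sin\big(\zeta(1-y)-\tfrac{\pi}{4}\big), & |x|<y,\\
2\cos\big(\zeta x+\tfrac{\pi}{4}\big)\cos\big(\zeta(1+y)-\tfrac{\pi}{4}\big), & |x|<-y,\\
-2\sin\big(\zeta y+\tfrac{\pi}{4}\big)\sin\big(\zeta(1+x)-\tfrac{\pi}{4}\big), & x<-|y|.
\end{cases}
\end{equation*}
Because $\sin\zeta>0$ on $(0,\tfrac{\pi}{2})$, the sign of $\ol k$ equals the sign of the right-hand side, so the task reduces to locating the argument of each trigonometric factor.

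For part (1) with $\zeta\in(0,\tfrac{\pi}{4})$, on each branch the ``angular'' factor has argument of modulus below $\tfrac{\pi}{4}$ (since $|\zeta x|,|\zeta y|<\tfrac{\pi}{4}$) and the ``affine'' factor involves a variable confined to $[0,1]$ by the branch condition, so $\zeta(1\mp x)$ or $\zeta(1\pm y)$ lies in $[0,\tfrac{\pi}{4})$; a short sign check of the four products then gives $\ol k>0$ on all of $I^2$. For part (3) with $\zeta\in[\tfrac{\pi}{4},\tfrac{\pi}{2})$, the crucial observation is that the two $x$-intervals defining $S$ are exactly $\{x>0:\zeta x<\tfrac{\pi}{4},\ \zeta(1-x)<\tfrac{\pi}{4}\}$ and $\{x<0:\zeta|x|<\tfrac{\pi}{4},\ \zeta(1+x)<\tfrac{\pi}{4}\}$. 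Fixing such an $x$ and letting $y$ sweep $[-1,1]$, we cross exactly three branches (C, A, B when $x>0$; C, D, B when $x<0$), and in each of them the two controlling quantities $\zeta x,\zeta(1-x)$ (resp. $\zeta|x|,\zeta(1+x)$) lie in $(0,\tfrac{\pi}{4})$, which forces every factor into the sign window keeping the product strictly positive; hence $\ol k>0$ on $S$.

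The only real difficulty is the bookkeeping: matching each region of $(x,y)$ to its branch and checking the range of every argument. Once the factorisation above is in hand this is routine and the individual steps are short. I would close with the remark that across the diagonal $x=y$ the kernel carries the unit jump typical of a first-order Green's function, so the branch boundaries form a null set on which $\ol k$ is two-valued; as the one-sided limits from both adjacent branches are positive, the assertion ``strictly positive'' is unambiguous off this null set.
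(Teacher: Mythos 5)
Your argument is correct: I checked all four sum-to-product factorisations against the explicit formula for $2\sin\zeta\,\ol k$ (e.g. on $x>|y|$ one has $\cos\zeta(1-x-y)+\sin\zeta(1-x+y)=2\cos(\tfrac{\pi}{4}-\zeta y)\cos(\zeta(1-x)-\tfrac{\pi}{4})$, and similarly on the other branches), the sign windows you describe do hold on each region, and the reduction of parts (2) and (4) to (1) and (3) via $\ol k_\omega(x,y)=-\ol k_{-\omega}(-x,-y)$ is legitimate since $S$ depends only on $|\zeta|$ and is invariant under $(x,y)\mapsto(-x,-y)$. Be aware, though, that the paper itself contains no proof of this lemma: it is explicitly presented as a summary of results already established in \cite{alb-adr} and \cite{ac-gi-at-bvp}, so there is no in-paper argument to compare yours against. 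What your write-up contributes is a self-contained verification that would otherwise require chasing the cited references; its one genuinely valuable structural observation is the identification of the two $x$-intervals of $S$ as precisely the sets where both controlling angles $\zeta|x|$ and $\zeta(1-|x|)$ stay below $\tfrac{\pi}{4}$, which explains \emph{why} the positivity strip has that shape rather than merely confirming it. Your closing caveat about the jump across $y=\pm x$ is also apposite: the kernel is only defined on the four open branch regions, so ``strictly positive on $S$'' is to be read off that null set, which is all that conditions $(C_1)$--$(C_2)$ require.
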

First, in \cite{alb-adr}, it was proven that $\ol k$ satisfies the equation $\frac{\partial \ol k}{\partial x}(x,y)+\omega \ol k(-x,y)=0$. Also, the strip $S$ satisfies that, if $(x,y)\in S$, then $(-x,y)\in S$, so, wherever $\ol k\ge0$, $\frac{\partial \ol k}{\partial t}\le0$.  Hence, we have
\begin{equation}\label{momr}\frac{1}{M(\omega)}=T\sup_{(\ol a,\ol b)\in\tilde S}\int_{\ol a}^{\ol b}\ol k(\ol b,y)dy.\end{equation}
 Notice that, fixed $\ol b$, it is of our interest to take $\ol a$ as small as possible (as long as (C2) is satisfied) for we are integrating a positive function on the interval $[\ol a, \ol b]$.\par
With these considerations in mind, we will prove that
$$M_{opt}=\begin{cases} \omega, & \text{if}\quad \zeta\in(0,\frac{\pi}{4}), \\  \frac{\omega}{\cos\zeta}, & \text{if}\quad \zeta\in[\frac{\pi}{4},\frac{\pi}{2}),\end{cases}$$
by studying two cases: (A) and (B).\par
(A) If $\zeta\in(0,\frac{\pi}{4})$, $\ol k$ is positive and
$$\frac{1}{M_{opt}}=T \sup_{\ol b\in[-1,1]}\int_{-1}^{\ol b}\ol k(\ol b,y)dy.$$
(A1) If $\ol b\le0$, let
\begin{align*} g_1(\ol b):= & 2\sin\zeta\int_{-1}^{\ol b}\ol k(\ol b,y)dy \\ = & \int_{-1}^{\ol b}[\cos \zeta (1+y+\ol b)+\sin  \zeta (1+y-\ol b)]ds=\frac{1}{\zeta}\[\sin\zeta(1+2\ol b)-\sin\zeta \ol b+\cos\zeta \ol b-\cos\zeta\].\end{align*}
Then, taking into account that $\ol b\in[-1,0]$ and $\zeta\in\(0,\frac{\pi}{4}\)$ and studying the range of the arguments of the sines and cosines involved, we get
$$g_1'(\ol b)=2 \cos\zeta(1 + 2\ol b)-\sqrt 2 \sin\(\zeta\ol b+\frac{\pi}{4}\)\ge2\frac{\sqrt{2}}{2}-\sqrt{2}\frac{\sqrt{2}}{2}=\sqrt{2}-1>0.$$
Therefore, the maximum of $g_1$ in $[0,1]$ is reached at $0$.\par
(A2) If $\ol b\ge0$,
\begin{align*} g_1(\ol b) & = \int_{-1}^{-\ol b}[\cos \zeta (1+y+\ol b)+\sin  \zeta (1+y-\ol b)]ds+\int_{-\ol b}^{\ol b}[\cos \zeta (1-y-\ol b)+\sin  \zeta (1+y-\ol b)]ds  \\ & =-\frac{1}{\zeta}\[\cos\zeta-\cos\zeta b-2\sin\zeta+\sin \zeta b+\sin\zeta(1-2b)\].\end{align*}
Now, we have
$$g_1'''(\ol b)=-\zeta^2\[8 \cos\zeta(1 - 2\ol b)-\sqrt 2 \sin\(\zeta\ol b+\frac{\pi}{4}\)\]<0.$$
Therefore, $g_1'$ reaches its minimum in $[0,1]$ at $0$ or $1$.
$$g_1'(0)=2 \cos\zeta-1,\ g_1'(1)=\cos\zeta-\sin\zeta>0.$$
Thus, $g_1'>0$ in $[0,1]$, this is, the maximum of $g_1$ in $[0,1]$ is reached at $1$. In conclusion, by the continuity of $g_1$, the maximum of $g_1$ in $[-1,1]$ is reached at $1$ and so
$$\frac{1}{M_{opt}}= T\int_{-1}^{1}\ol k(1,y)dy=T\frac{g_1(1)}{2\sin\zeta}=\frac{T}{\zeta}=\frac{1}{\omega}.$$\par
 Observe now that, since $[\ol a,\ol b]=[-1,1]$, $c=c_1=c_2=\cot\(\frac{\pi}{4}+\zeta\)$.\par
(B) Now assume $\zeta\in[\frac{\pi}{4},\frac{\pi}{2})$. $\ol k$ is positive on $S$.\par
Assume $\ol b>0$. Also, since $\ol k(x,y)=\ol k(-y,-x)$ (see \cite{alb-adr}), fixed $b\in S$, the smallest $\ol a$ that can be taken is $\ol a=1-\frac{\pi}{4\zeta}$, so
$$g_2(\ol b):=2\sin\zeta\int_{1-\frac{\pi}{4\zeta}}^{\ol b} \ol k(\ol b, y)dy=\frac{1}{\zeta}\[\cos\(\frac{\pi}{4}+(\ol b-2)\zeta\)+\cos\(\frac{\pi}{4}+\ol b\zeta\)-\cos \zeta+\sin\((2\ol b-1)\zeta\)\].$$
Thus, we have
$$g_2'''(\ol b)=\zeta^2\[\sin\(\frac{\pi}{4}+(b-2)\zeta\)+\sin\(\frac{\pi}{4}+b\zeta\)-8\cos\(\(1-2b\)\zeta\)\]>\zeta^2\(2-8\frac{\sqrt 2}{2}\)<0.$$
Therefore, $g_2'$ reaches its minimum in $Y:=\[1-\frac{\pi}{4\zeta},\frac{\pi}{4\zeta}\]$ at $1-\frac{\pi}{4\zeta}$ or $\frac{\pi}{4\zeta}$.
$$g_2'\(1-\frac{\pi}{4\zeta}\)=2 \sin\zeta,\ g_2'\(\frac{\pi}{4\zeta}\)=2(\sin\zeta-\cos^2\zeta)>0.$$
Thus, $g_2'>0$ in $Y$, this is, the maximum of $g_2$ in $Y$ is reached at $\frac{\pi}{4\zeta}$
and so
$$T\int_{{1-\frac{\pi}{4\zeta}}}^{\frac{\pi}{4\zeta}}\ol k\(\frac{\pi}{4\zeta},y\)dy=T\frac{g_2\(\frac{\pi}{4\zeta}\)}{2\sin\zeta}=\frac{T\cos\zeta}{\zeta}=\frac{\cos\zeta}{\omega}.$$\par
Now, the case $\ol b\le 0$ can be reduced to the case $\ol b\ge0$ just taking into account that $\ol k(z,y)=\ol k(z+1,y+1)$ for $z,y\in[-1,0]$ (cf. \cite{ac-gi-at-bvp}) and making the change of variables $\ol y= y-1$, so
$$\int_{{1-\frac{\pi}{4\zeta}}}^{\frac{\pi}{4\zeta}}\ol k\(\frac{\pi}{4\zeta},y\)dy=\int_{{-\frac{\pi}{4\zeta}}}^{\frac{\pi}{4\zeta}-1}k\(\frac{\pi}{4\zeta},\ol y+1\)d\ol y=\int_{{-\frac{\pi}{4\zeta}}}^{\frac{\pi}{4\zeta}-1}k\(\frac{\pi}{4\zeta}-1,\ol y\)d\ol y.$$
Hence we have
$$\frac{1}{M_{opt}}= \frac{\cos\zeta}{\omega},$$
 Consider again the case $\zeta\in\(0,\frac{\pi}{4}\)$ and $\hat a_{opt}$,  $\hat b_{opt}$,  $c(\hat a_{opt},\hat b_{opt})$,  the values for which $M_{opt}$ is reached. In the following table we summarize these findings. \par
 \vspace*{1em}
 \begin{center}
\begin{tabular}{|c|c|c|c|c|c|}
\hline  $\zeta$ & $\hat a_{opt}$ &  $\hat b_{opt}$ & $M_{opt}$ & $c(\hat a_{opt},\hat b_{opt})$ & $\|\c\|$ \\ 
\hline $\(0,\frac{\pi}{4}\)$ & $-1$ & $1$  & $\omega$  & $\cot\(\frac{\pi}{4}+\zeta\)$ &  $\sqrt 2\sin\(\frac{\pi}{4}+\zeta\)$\\
\hline 
\end{tabular}
\end{center}
 \vspace*{1em}
 When  $\zeta\in\[\frac{\pi}{4},\frac{\pi}{2}\)$ we have the following.
 \vspace*{1em}
 \begin{center}
\begin{tabular}{|c|c|c|c|c|c|}
\hline  $\zeta$ & $\hat a_{opt}$ &  $\hat b_{opt}$ & $M_{opt}$ &$\|\c\|$ \\ 
\hline $\[\frac{\pi}{4},\frac{\pi}{2}\)$ & $1-\frac{\pi}{4\zeta}$ & $\frac{\pi}{4\zeta}$ &  $\frac{\omega}{\cos\zeta}$ & $\sqrt 2$ \\ 
  & $-\frac{\pi}{4\zeta}$ & $\frac{\pi}{4\zeta}-1$ &  & \\ 
\hline 
\end{tabular}
\end{center}
 \vspace*{1em}
 We point out that in this second case we cannot take an interval $[\hat a,\hat b]$ at which $M_{opt}$ is reached because $c_1$ and $c_2$ tend to zero as we approach that interval, but we may take $[\hat a,\hat b]$ as close as possible to these values, in order to approximate $M_{opt}$. 
 
With all these ingredients we can apply Theorem~\ref{thmcasesS} in order to solve \eqref{eqgenpro2}-\eqref{prdic2} for some given $f$ and $\alpha$.
 
\section{An application to a thermostat problem}
\subsection{The model}
We work here with the model of a light bulb with a temperature regulating system (thermostat). The model includes a bulb in which a metal filament, bended on itself, is inserted with only its two extremes outside of the bulb. There is a sensor that allows to measure the temperature of the filament at a point $\eta$ (see Figure \ref{fig3}). The bulb is sealed with some gas in its interior.
\begin{figure}[hht]
\center{\includegraphics[width=.5\textwidth]{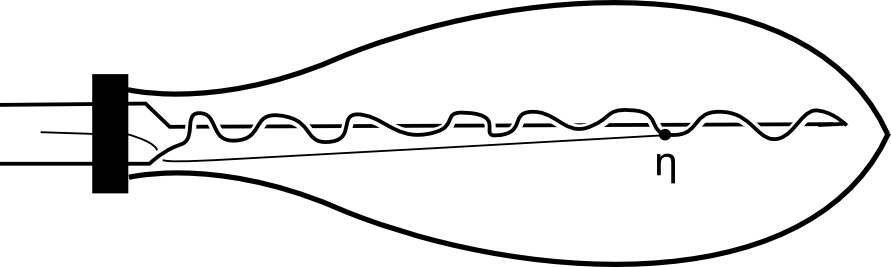}}\caption{Sketch of the light bulb model with a sensor at the point $\eta$.}\label{fig3}
\end{figure}\par
As variables, we take $u$ for the temperature, $t\in [0,1]$ for a point in the filament and $x$ for  the time\footnote{We use this unusual notation in order to be consistent with the rest of the paper. Since we are looking for stationary solutions of the model, the temporal variable will no longer appear after the model is set.}.\par
We control the light bulb via two thermopairs connected to the extremes of the filament. This allows us to measure (and hence modify via a resistance or with some other heating or cooling system) the variation of the temperature with respect to $x$. Also, we will be able to measure the total light ouput of the light bulb.\par
The problem can then be stated as
\begin{equation}\label{eqmodel}\begin{aligned}
\frac{du}{dx}(t,x)= & d_1\frac{d^2u}{dt^2}(t,x)+\int_0^1u^4(y,x)\upsilon(s,t,u(t,x))ds-d_2u^4(t,x)\\ & +j(t,u(t,x))+(d_3+d_4u(t,x))\hat I^2+d_5(u_0-u(t,x)),\end{aligned}
\end{equation}
\begin{equation}\label{bcmodel}
\frac{d u}{dt}(0,x)+d_6\int_0^1u^4(s,x)ds=0,\quad \beta \frac{d u}{dt}(1,x)+u(\eta,x)=0
\end{equation}
where $d_1,\dots,d_5$ and $u_0$ are physical (real) constants that can be determined either theoretically or experimentally; $d_6$, $\hat I$ and $\b$ are real constants to be chosen; $\eta\in[0,1]$ is the position of the sensor at the filament and $\upsilon$ is some real continuous function. We explain now each component of the equation.\par
The term $d_1\frac{d^2u}{dt^2}(t,x)$ comes from the traditional heat equation, $\frac{du}{dx}=d_1\frac{d^2u}{dt^2}$. The integral in the equation stands for the irradiance (that is, power per space unit squared), in form of blackbody radiation, absorbed by the point $t$ and emitted from every other point $s$  of the filament. The function $\upsilon$ gives the rate of this absorption depending on $t$, $s$ and also on $u$, since the reflectivity of metals changes with temperature (see \cite{Uji}). The equation behind the fourth power in the integral comes from the Stefan-Boltzmann equation for blackbody power emission, $j^{\star} = \tilde{k} u^{4}(t,x)$, where $j^{\star}$ is the irradiance and $\tilde{k}$ a constant. Observe that considering the power emission from the rest of the filament is important, since, as early as 1914 (see \cite{Cob}), it has been observed that an interior and much brighter (90 to 100 percent) helix appears in helical filaments of tungsten. Although a $200\,^\circ$C difference would be necessary to account for the extra brightness, experiments show that most of it is due to reflection, being the difference in the temperature less than $5\,^\circ$C.\par
The term $-d_2u^4(t,x)$ accounts again for the Stefan-Boltzmann equation, this time for the irradiance of the point, $j(t,u(t,x))$ for the energy absorbed from the bulb (via reflection and/or blackbody emission) and $(d_3+d_4u(t,x))\hat I^2$ is the heat produced by the intensity of the electrical current, $\hat I$, going through the filament via Ohm's law taking into account a first order approximation of the variation of the resistivity of the metal with temperature. Finally, $d_5(u_0-u(t,x))$ is the heat transfer from the filament to the gas due to Newton's law of cooling, where $u_0$ is the temperature at the interior of the bulb which we may assume constant.\par
The first boundary condition controls the variation of the temperature at the left extreme depending on the total irradiance of the bulb, while the second boundary condition controls the variation of the temperature at the right end of the filament depending on the temperature at $\eta$.\par
Consider now the term
$$\Gamma[u](t,x):=\int_0^1u^4(s,x)\upsilon(s,t,u(t,x))ds.$$
For a fixed $x$, $\Gamma$ is a linear operator on $C[0,1]$. If we consider the wire to be bended on itself, in such a way that every point of the filament touches one and only one other point of the filament, by the continuity of the temperature on the filament, we may take the approximation $\Gamma[u](t,x)=d_7 u^4(\sigma(t,x))$ for some constant $d_7$ and a function $\sigma$ which maps every point in the filament to the other point it is affected by. Clearly, $\sigma$ is an involution.\par
With these ingredients, and looking for stationary solutions of problem \eqref{eqmodel}-\eqref{bcmodel}, we arrive to a BVP of the form
\begin{equation}\label{eq41}
u''(t)+g(t)f(t,u(t),u(\sigma(t)))=0,\ t \in (0,1),
\end{equation}
\begin{equation}\label{eq41bc}
u'(0)+{\alpha}[u]=0,\; \beta u'(1) + u(\eta)=0,\; {\eta}\in [0,1].
\end{equation}
\begin{rem} In some other light bulb model it could happen that every point of the filament is `within reach' of more than one other point, which would mean we could have a multivalued function $\sigma·$ or just two functions $\sigma_1$ and $\sigma_2$ in the equation \eqref{eq41}. Our theory can be extended to the case of having more than one function $\sigma$. A possible approach to the multivalued case would require to extend the theory in \cite{gi-pp}, which is beyond the scope of this paper.
\end{rem}

\subsection{The associated perturbed integral equation}
We now turn our attention to the second order BVP \eqref{eq41}-\eqref{eq41bc}.\par

In a similar way as in \cite{gijwems}, the solution of the BVP~\eqref{eq41}-\eqref{eq41bc} can be expressed as
$$
u(t)=\c(t){\alpha}[u]
+\int_{0}^{1}k(t,s)g(s)f(s,u(s),u(\sigma(s)))ds,
$$
 where $\c(t)=\beta + \eta- t $,  and
\begin{equation*}\label{krnl2}
k(t,s)=\beta +\begin{cases} \eta-s,\ &s\leq \eta\\0,\ &s>\eta
\end{cases}
-\begin{cases} t-s,\ &s\leq t\\ 0,\ &s>t.
\end{cases}
\end{equation*}
Here we focus on the case $\beta\geq 0$ and $0<\beta+\eta<1$, that leads (in similar way to \cite{gijwems}) to the existence of solutions that are positive on a sub-interval. The constant $c$ for this problem (see for example \cite{gi-pp}) is
\begin{equation*}\label{c-bcB}
c=\begin{cases}
\beta / (\beta+\eta), & \text{ for }\hat{b}\leq \eta,\  \beta+\eta\geq \frac{1}{2}, \\
\beta / (1 -(\beta + \eta )), & \text{ for }\hat{b}\leq \eta,\ \beta+\eta< \frac{1}{2}, \\
(\beta + \eta -\hat b) / (\beta+\eta), & \text{ for }\hat{b}> \eta,\  \beta+\eta\geq \frac{1}{2},\\
(\beta + \eta -\hat b)/(1 -(\beta + \eta )), & \text{ for }\hat{b}> \eta,\ \beta+\eta< \frac{1}{2}.
\end{cases}
\end{equation*}
Also, we have
$$
\Phi (s)=\|\gamma\|=\begin{cases}
\beta+\eta, & \text{ for }\beta+\eta\geq \frac{1}{2}, \\
1 -(\beta + \eta ), & \text{ for }\beta+\eta< \frac{1}{2},
\end{cases}
$$
and clearly
$$c_2\|\c\|=\b+\eta-\hat b .$$
Theorem~\ref{thmcasesS} can be applied to this problem for given $f$, $\alpha$ and $g$. We now set  $g\equiv 1$ and recall 
(see \cite{gijwems}) that
$$
\sup_{t \in [0,1]} \int^{1}_{0} |k(t,s)|\,ds =\max \Bigl \{ \beta
+ \frac{1}{2} \eta^{2}, \beta^{2}-\beta +\frac{1}{2}(1-\eta^{2})
\Bigr \}.
$$
Furthermore, note that the solution of the problem
$$w''(t)=-1,\quad w'(0)=0,\quad \b w'(1)+w(\eta)=0,$$
is given by $w(t)=\b+\frac{1}{2}(\eta^2-t^2)$, which implies that
$$w(t)=\int_0^1k(t,s)ds=\b+\frac{1}{2}(\eta^2-t^2).$$
Using this fact and Fubini's Theorem we have that
$$\int_{0}^{1}\mathcal{K}_A(s)ds=\int_{0}^1\int_{{0}}^{{1}} k(t,s)\,dA(t)\,ds=\int_{0}^1\int_{{0}}^{{1}} k(t,s)\,ds\,dA(t)=\a\[\b+\frac{1}{2}(\eta^2-t^2)\].$$
With all these facts, the conditions  \eqref{eqmest} and \eqref{eqmest2} can be rewritten, respectively, for problem \eqref{eq31}--\eqref{eq31bc} as

\begin{equation}\tag{$\tilde{\mathrm{I}}_{\rho }^{1}$} \label{eqmest3} f^{{-\rho},{\rho}} \;<m_\a,
\end{equation}
where

\begin{align*}\frac{1}{m_\a} := & \dfrac{(\b+\eta)\chi_{\[\frac{1}{2},+\infty\)}(\b+\eta)+(1-\b-\eta)\chi_{\(-\infty,\frac{1}{2}\)}(\b+\eta)}{1-\alpha[\beta + \eta- t ]}\cdot
\a\[\b+\frac{1}{2}(\eta^2-t^2)\]\\
& +\max  \left\{ \beta
+ \frac{1}{2} \eta^{2}, \beta^{2}-\beta +\frac{1}{2}(1-\eta^{2})\right\},\end{align*}
$\chi_B$ is the characteristic function of the set $B$;
and
\begin{equation}\tag{$\tilde{\mathrm{I}}_{\rho }^{0}$}
  \label{eqmest4}
f_{\rho ,{\rho /c}}>M_\a,
\end{equation}
where
$$\frac{1}{M_\a}:=\dfrac{\b+\eta-\hat b}{1-\alpha[\beta + \eta- t ]}\cdot
\a\[\int_{\hat{a}}^{\hat{b}}k(t,s)\,ds\]
+\dfrac{1}{M(\hat{a},\hat{b})}.
$$

Therefore, we can restate Theorem \ref{thmcasesS} as follows.
\begin{thm}\label{thmcasesS2}
\label{thmmsol1} Theorem \ref{thmcasesS} is satisfied if we change the conditions $(\mathrm{I}_{\rho }^{0})$ and $(\mathrm{I}_{\rho }^{1})$ by $(\tilde{\mathrm{I}}_{\rho }^{0})$ and $(\tilde{\mathrm{I}}_{\rho }^{1})$ respectively.
\end{thm}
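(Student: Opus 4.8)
The plan is to reduce Theorem~\ref{thmcasesS2} to Theorem~\ref{thmcasesS} by observing that, for the thermostat BVP \eqref{eq31}--\eqref{eq31bc}, the conditions $(\tilde{\mathrm{I}}_{\rho}^{1})$ and $(\tilde{\mathrm{I}}_{\rho}^{0})$ are precisely the explicit forms that the sufficient conditions \eqref{eqmest} and \eqref{eqmest2} take for this particular data. Since, by the Remark following Lemma~\ref{ind1b} and by Remark~\ref{easyidx0}, we already know that \eqref{eqmest}$\;\Rightarrow(\mathrm{I}_{\rho}^{1})$ and \eqref{eqmest2}$\;\Rightarrow(\mathrm{I}_{\rho}^{0})$, it will follow that $(\tilde{\mathrm{I}}_{\rho}^{1})\Rightarrow(\mathrm{I}_{\rho}^{1})$ and $(\tilde{\mathrm{I}}_{\rho}^{0})\Rightarrow(\mathrm{I}_{\rho}^{0})$, after which the conclusion is immediate.

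Concretely, I would substitute the problem data ($g\equiv 1$, $[a,b]=[0,1]$, $\gamma(t)=\beta+\eta-t$) into the left-hand sides of \eqref{eqmest} and \eqref{eqmest2} and match each factor. For \eqref{eqmest}: the norm $\|\gamma\|$ equals $\beta+\eta$ when $\beta+\eta\ge \tfrac12$ and $1-(\beta+\eta)$ otherwise, which is exactly the characteristic-function factor $(\beta+\eta)\chi_{[\frac12,\infty)}(\beta+\eta)+(1-\beta-\eta)\chi_{(-\infty,\frac12)}(\beta+\eta)$ appearing in $1/m_\alpha$; the already-established identity $\int_0^1 k(t,s)\,ds=\beta+\tfrac12(\eta^2-t^2)$ combined with Fubini gives $\int_0^1\mathcal{K}_A(s)\,ds=\alpha[\beta+\tfrac12(\eta^2-t^2)]$; and the recalled value of $\sup_t\int_0^1|k(t,s)|\,ds$ supplies $\tfrac1m=\max\{\beta+\tfrac12\eta^2,\ \beta^2-\beta+\tfrac12(1-\eta^2)\}$. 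Assembling these reproduces $1/m_\alpha$ verbatim, so \eqref{eqmest} reads $f^{-\rho,\rho}<m_\alpha$, i.e. $(\tilde{\mathrm{I}}_{\rho}^{1})$. For \eqref{eqmest2} the same mechanism applies: the identity $c_2\|\gamma\|=\beta+\eta-\hat b$ and a second Fubini interchange $\int_{\hat a}^{\hat b}\mathcal{K}_A(s)\,ds=\alpha\!\left[\int_{\hat a}^{\hat b}k(t,s)\,ds\right]$ turn the bracketed factor of \eqref{eqmest2} into $1/M_\alpha$, so that \eqref{eqmest2} reads $f_{\rho,\rho/c}>M_\alpha$, i.e. $(\tilde{\mathrm{I}}_{\rho}^{0})$.

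With the two implications in hand, any of the hypothesis lists $(S_1)$--$(S_6)$ phrased in terms of the tilde conditions implies the corresponding list in terms of the original $(\mathrm{I}_{\rho}^{1})$, $(\mathrm{I}_{\rho}^{0})$, and Theorem~\ref{thmcasesS} then yields the asserted one, two, or three nonzero solutions. The argument is therefore essentially bookkeeping rather than a new idea; the only points requiring care, and the ones I expect to be the main obstacle, are verifying that the characteristic-function factor correctly encodes both branches of $\|\gamma\|$ according to the sign of $\beta+\eta-\tfrac12$, and justifying the two interchanges of integration, which are legitimate because $k(\cdot,s)$ is integrable and $A$ is of bounded variation under $(C_3)$.
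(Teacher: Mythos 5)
Your proposal is correct and follows essentially the same route as the paper: the authors obtain $(\tilde{\mathrm{I}}_{\rho }^{1})$ and $(\tilde{\mathrm{I}}_{\rho }^{0})$ precisely by substituting the thermostat data into \eqref{eqmest} and \eqref{eqmest2} (using the Fubini identity for $\int \mathcal{K}_A$ and the explicit values of $\|\gamma\|$, $c_2\|\gamma\|$ and $1/m$), and the theorem then follows because these are the stronger, easier-to-check sufficient conditions for $(\mathrm{I}_{\rho }^{1})$ and $(\mathrm{I}_{\rho }^{0})$ noted in the remarks after Lemmas \ref{ind1b} and \ref{idx0b1}. Your bookkeeping matches the paper's (essentially omitted) proof.
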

We now illustrate how the behaviour of the deviated argument affects the allowed growth of the nonlinearity $f$.
\begin{ex}\label{ex1}
Take $\eta=1/5$, $\beta=3/5$. It was proven in  \cite{gi-pp} that the optimal interval for such a choice is $[\hat a,\hat b]=[0,3/5]$, for which $M_{opt}=5$, $m=50/31$, $c_1=1/4$.
Consider $\sigma(t)=11t-101t^2+318t^3-394t^4+167t^5$. $\sigma$ satisfies $\sigma([0,1])=[0,1]$ and $\sigma([0,2/5])\subseteq[0,2/5]$ as it is shown in Figure \ref{fig1}.
\begin{figure}[hht]
\center{\includegraphics[width=.5\textwidth]{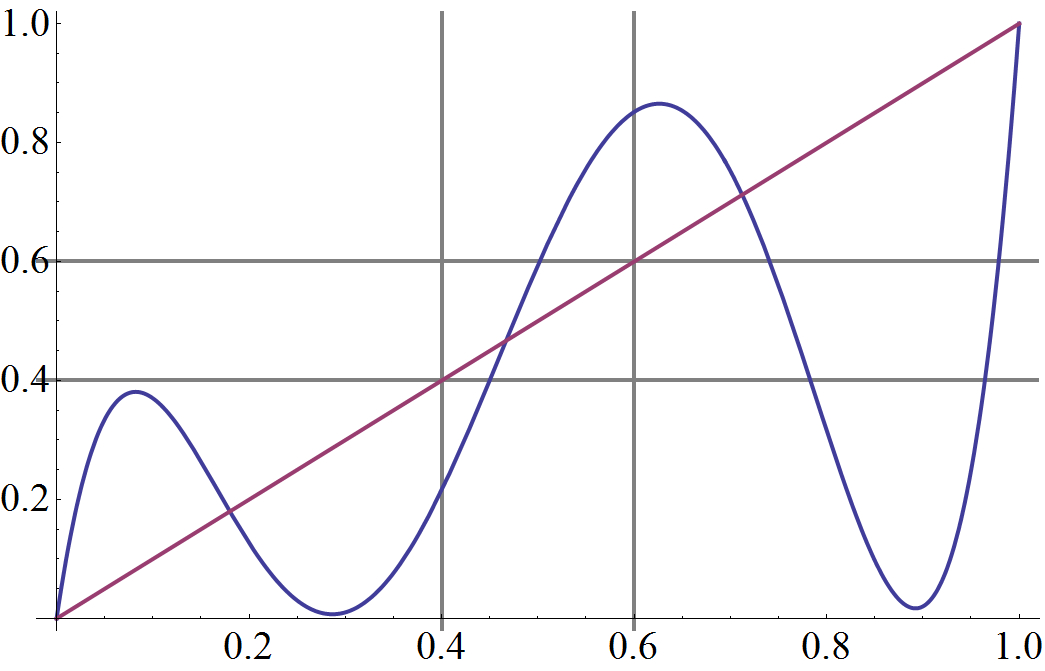}}\caption{Plot of the function $\sigma$ and the identity.}\label{fig1}
\end{figure}\par
Remember that the condition \eqref{eqmest4} is of the form
\begin{equation*}
f_{\rho ,{\rho /c}}(\hat a,\hat b)\; \left(p(\a)q(\hat a,\hat b)+r(\hat a,\hat b)\right)>1
\end{equation*}
where
$$ p(\a)=\dfrac{\|\gamma\|}{1-\alpha[\gamma]},\quad q(\hat a,\hat b)=c_2(\hat a,\hat b)\int_{\hat{a}}^{\hat{b}} \mathcal{K}_A(s)g(s)\,ds\quad\text{and}\quad r(\hat a,\hat b)=\dfrac{1}{M(\hat{a},\hat{b})}.$$
Now, picking up Remark \ref{whichone}, the questions is: Is it worth it to take $[\hat a,\hat b]=[0,3/5]$ or it is preferable to take $[\hat a,\hat b]=[0,2/5]$? Observe that, as mentioned, $\sigma([0,2/5])\subseteq[0,2/5]$ but $\sigma([0,3/5])\not\subseteq[0,3/5]$, which means that the value of $f_{\rho ,{\rho /c}}(\hat a,\hat b)$ can vary considerably from one case to the other. It will be preferable to take  $[\hat a,\hat b]=[0,2/5]$ if and only if
$$\frac{f_{\rho ,{\rho /c}}(0,2/5)}{f_{\rho ,{\rho /c}}(0,3/5)}>\frac{p(\c,\a)q(0,3/5)+r(0,3/5)}{p(\c,\a)q(0,2/5)+r(0,2/5)}.$$
We can compute, a priori, $q(0,3/5)$, $q(0,2/5)$, $r(0,2/5)$ and $r(0,3/5)$, but $f_{\rho ,{\rho /c}}(0,2/5)$ and $f_{\rho ,{\rho /c}}(0,3/5)$ will depend on $f$ and $p(\c,\a)$ on $\a$. As a simple example, if $f$ is zero at a subset of $(2/3,5/3]$ of positive measure, it is clear that the choice to make is $[\hat a,\hat b]=[0,2/5]$.
\end{ex}
\begin{ex}
Continuing with last example, assume now $\a[u]=\l\, u(2/5)$ for some $\l\in(0,5/2)$. $(C_1)$ and $(C_2)$ are satisfied by the properties of the kernel and by the choice of $c_1$. We assume $(C_6)$ is satisfied for the nonlinearity chosen. $(C_4)$ and $(C_7)$ are obviously satisfied. $\mathcal K_a(s)= k((2\l)/5,s)>0$ for every $s\in[0,1]$ by the properties of the kernel, so $(C_3)$ is also satisfied. Last, $0\le\a[4/5- t]=(2\l)/5<1$ and, by the choice of $c_2$, $(C_7)$ is satisfied as well. In this case we have
$m_\a=25/26$, and it is independent of the choice of $[\hat a, \hat b]$. Let us compare the intervals $[0, 2/5]$ and $[0,3/5]$.
$$\frac{1}{M_\a(0,\hat b)}=\frac{4-5\hat b}{1-2\l}\int_{0}^{\hat b}k((2\l)/5,s)\, ds+\inf_{t\in(0,\hat b]}\int_{0}^{\hat b} k(t,s)\, ds.$$
It was proven in \cite{gi-pp} that, for $0\le \hat{a}<\hat{b}<\b+\eta$,
$$\inf_{t\in(0,\hat b]}\int_{0}^{\hat b} k(t,s)\, ds=\int_{0}^{\hat b} k(\hat b,s)\, ds.$$
Hence,
$$M_\a(0,2/5)=\begin{cases} \frac{50(1-2\l)}{43+2\l} & \text{if}\quad \l\in[1,5/2),\\
 \frac{50(1-2\l)}{(7-2\l)(5+4\l)} & \text{if}\quad \l\in(0,1),\end{cases}
$$
$$M_\a(0,3/5)=\begin{cases} \frac{25+50\l}{19+4\l} & \text{if}\quad \l\in[1,5/2),\\
 \frac{50(1+2\l)}{29+20\l-4\l^2} & \text{if}\quad \l\in(0,1).\end{cases}
$$
 Figure \ref{fig2} shows how these two values vary depending on $\l$.
\begin{figure}[hht]
\center{\includegraphics[width=.5\textwidth]{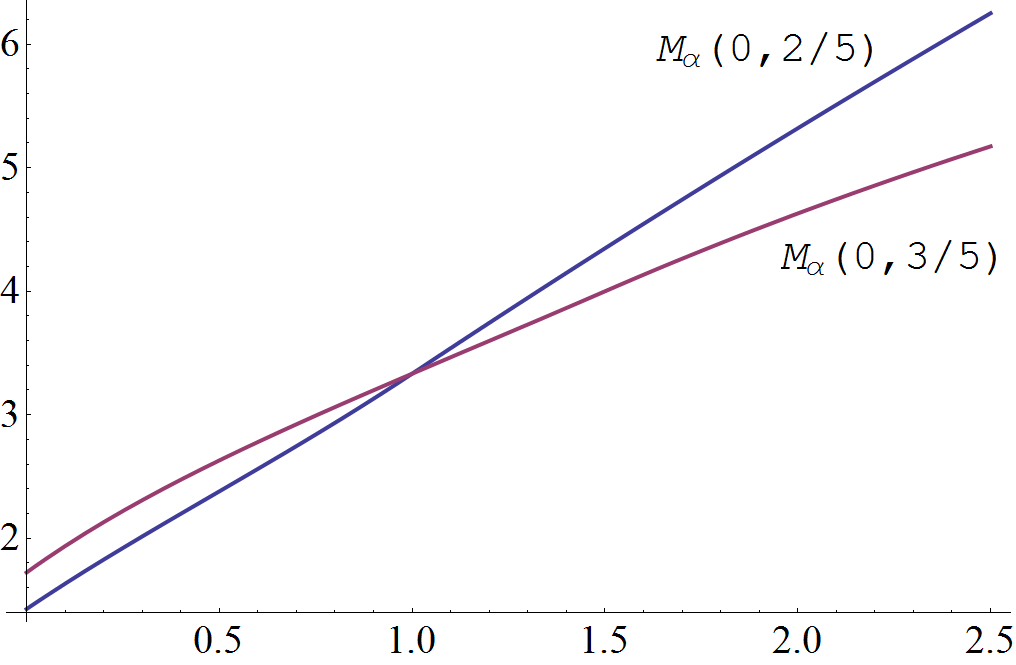}}\caption{Plot of $M_\a(0,2/5)$ and $M_\a(0,3/5)$ depending on $\l$.}\label{fig2}
\end{figure}\par
If we take an specific value for $\l$, say $\l=1$, we get
$M_\a(0,2/5)=M_\a(0,3/5)=10/3$, and so it is more convenient to take $[\hat a,\hat b]=[0,2/5]$. The reason for this is that $f_{\rho,\rho/c}(0,2/5)\ge f_{\rho,\rho/c}(0,3/5)$ independently of $f$, and so $\mathrm{I}_{\rho }^{0}$ is more easily satisfied.\par
Observe in Figure \ref{fig2} that the graphs of $M_\a(0,2/5)(\l)$ and $M_\a(0,3/5)(\l)$ cross at $\l=1$. If $f$ is continuous and $f_{\rho,\rho/c}(0,2/5)> f_{\rho,\rho/c}(0,3/5)$, since $M_\a(0,2/5)(1)$ is a better choice than $M_\a(0,3/5)(1)$, by the continuity of $f$, so it will be in a neighborhood of $1$. 
That shows that the condition $M_\a(0,2/5)(\l)<M_\a(0,3/5)(\l)$ may help but is not deciding when choosing the interval.
\end{ex}

\begin{acknowledgements}
The authors want to express their gratitude towards the physicist Santiago Codesido, whose ideas helped develop the model in Section 4.1.
This paper was partially written during a visit of G. Infante to the 
Departamento de An\'alise Matem\'atica of the Universidade de Santiago 
de Compostela. G. Infante is grateful to the people of the 
aforementioned Departamento for their kind and warm hospitality.
\end{acknowledgements}


\begin{thebibliography}{89}

\bibitem{Aft} A. R. Aftabizadeh, Y. K. Huang and J. Wiener, Bounded solutions for differential equations with reflection of the argument,  \textit{J. Math. Anal. Appl.}, \textbf{135} (1988), 31--37.

\bibitem{amann} H. Amann, Fixed point equations and nonlinear
eigenvalue problems in ordered Banach spaces, \textit{SIAM. Rev.},
\textbf{18} (1976), 620--709.

\bibitem{And} D. Andrade and T. F. Ma,  Numerical solutions for a nonlocal equation with reflection of the argument, \textit{Neural Parallel Sci. Comput.}, \textbf{10}  (2002), 227--233.


\bibitem{Bro} M. Brokate and A. Friedman, Optimal design for heat conduction problems with hysteresis, \textit{Siam J. Control and Optimization}, \textbf{27-4}  (1989), 697--717.

\bibitem{Cab95} A. Cabada, The method of lower and upper solutions for third -- order periodic boundary value problems. \textit{J. Math. Anal. Appl.}, \textbf{195} (1995), 568--589 .

\bibitem{alb-adr}  A. Cabada and F. A. F. Tojo, Comparison results for first order linear operators with reflection and periodic boundary value conditions, \textit{Nonlinear Anal.}, \textbf{78} (2013), 32--46.

\bibitem{alb-adr2} A. Cabada and F. A. F. Tojo, Solutions of the first order linear equation with reflection and general linear conditions, \textit{Global Journal of Mathematical Sciences (GJMS)}, \textbf{2:1} (2013).

\bibitem{alb-adr3} A. Cabada and F. A. F. Tojo, 
Existence results for a linear equation with reflection, non-constant coefficient and periodic boundary conditions,
\textit{J. Math. Anal. Appl.}, \textbf{412} (2013),  529--546.

\bibitem{ac-gi-at-bvp}
A. Cabada, G. Infante and F. A. F. Tojo, Nontrivial solutions of perturbed Hammerstein integral equations with reflections, \textit{Bound. Value Probl.}, \textbf{2013:86} (2013).

\bibitem{capomi} 
A. Cabada, R. L. Pouso and F.L. Minh\'{o}s,  Extremal solutions to fourth-order functional boundary value problems including multipoint conditions, \textit{Nonlinear Anal. Real World Appl.}, \textbf{10} (2009), 2157--2170.

\bibitem{Cam} S. Campbell and J. W. Macki, Control of the temperature at one end of a rod, \textit{Math. Comput. Modelling}, \textbf{32} (2000), 825--842.

\bibitem{Cob} W. W. Coblentz, Emissivity of straight and helical filaments of tungsten, \textit{Bulletin of the Bureau of Standards}, \textbf{14}  (1918), 115--131.

\bibitem{Conti} R. Conti, 
Recent trends in the theory of boundary value problems for ordinary differential equations.
\textit{Boll. Un. Mat. Ital.}, \textbf{22} (1967), 135--178. 

\bibitem{Fan-Ma} H. Fan and R. Ma, Loss of positivity in a nonlinear second order ordinary differential equations, \textit{Nonlinear Anal.}, \textbf{71} (2009), 437--444.

\bibitem{rub-rod-lms} R. Figueroa and R. L. Pouso, Minimal and maximal solutions to second-order boundary value problems with state-dependent deviating arguments, \textit{Bull. Lond. Math. Soc.}, \textbf{43} (2011), 164--174.

\bibitem{Fri} A. Friedman and L. S. Jiang, Periodic solutions for a thermostat control problem, \textit{Comm. in Partial Differential Equations}, \textbf{13} (1988), 515--550.

\bibitem{dfgior1}
D. Franco, G. Infante and D. O'Regan, Positive and nontrivial solutions for the Urysohn 
integral equation, \textit{Acta Math. Sin.}, \textbf{22}  (2006),  1745--1750.

\bibitem{dfgior2} D. Franco, G. Infante and D. O'Regan, Nontrivial solutions in abstract cones for Hammerstein integral systems, \textit{Dyn. Contin. Discrete Impuls. Syst. Ser. A Math. Anal.}, \textbf{14}  (2007), 837--850.

\bibitem{Goodrich}
C. S. Goodrich, On nonlinear boundary conditions satisfying certain asymptotic behavior, \textit{Nonlinear Anal.}, \textbf{76} (2013), 58--67. 

\bibitem{guimer} P. Guidotti and S. Merino, Gradual loss of positivity and hidden invariant cones in a scalar heat equation, \textit{Differential Integral Equations}, \textbf{13} (2000), 1551--1568.

\bibitem{guolak} D. Guo and V. Lakshmikantham,
\textit{Nonlinear Problems in Abstract Cones}, Academic Press, 1988.

\bibitem{Gup}
Ch. P. Gupta,  Existence and uniqueness theorems for boundary value problems involving reflection of the argument, \textit{Nonlinear Anal.}, \textbf{11} (1987),  1075--1083. 

\bibitem{Gup2}
Ch. P. Gupta, Two-point boundary value problems involving reflection of the argument, \textit{Internat. J. Math. Math. Sci.}, \textbf{10} (1987),  361--371.

\bibitem{giems} G. Infante, Eigenvalues of some non-local boundary-value problems,
\textit{Proc. Edinb. Math. Soc.}, \textbf{46} (2003), 75--86.

\bibitem{gi-poit} G. Infante, Positive solutions of some nonlinear BVPs involving singularities and integral BCs, \textit{Discrete Contin. Dyn. Syst. Series S}, \textbf{1} (2008), 99--106.

\bibitem{gi-caa} G. Infante, Nonlocal boundary value problems with two nonlinear boundary conditions, \textit{Commun. Appl. Anal.}, \textbf{12} (2008),  279--288.

\bibitem{gi-pp1} G. Infante and P. Pietramala,
Nonlocal impulsive boundary value problems with solutions that change sign, \textit{CP1124, Mathematical Models in Engineering, Biology, and Medicine, Proceedings of the International Conference on Boundary Value Problems, edited by A. Cabada, E. Liz, and J.J. Nieto}, (2009), 205--213.

\bibitem{gi-pp} G. Infante and P. Pietramala, Perturbed Hammerstein integral inclusions with solutions that change sign, \textit{Comment. Math. Univ. Carolin.}, \textbf{50} (2009), 591--605.

\bibitem{gipp-cant} G. Infante and P. Pietramala,
A cantilever equation with nonlinear boundary conditions
\textit{Electron. J. Qual. Theory Differ. Equ.}, \textbf{Spec. Ed. I}, No. 15 (2009), 1--14.

\bibitem{gijwjiea} G. Infante and J. R. L. Webb, Three point
boundary value problems with solutions that change sign,
\textit{J. Integral Equations Appl.}, \textbf{15}, (2003), 37--57.

\bibitem{gijwjmaa} G. Infante and J. R. L. Webb, Nonzero solutions
of Hammerstein integral equations with discontinuous kernels,
\textit{J. Math. Anal. Appl.}, \textbf{272} (2002), 30--42.

\bibitem{gijwnodea} G. Infante and J. R. L. Webb, Loss of positivity in
a nonlinear scalar heat equation, \textit{NoDEA
Nonlinear Differential Equations Appl.}, \textbf{13} (2006), 249--261.

\bibitem{gijwems} G. Infante and J. R. L. Webb, Nonlinear nonlocal boundary value problems and
perturbed Hammerstein integral equations, \textit{Proc. Edinb. Math.
Soc.}, \textbf{49} (2006), 637--656.

\bibitem{Jan1} T. Jankowski, Solvability of three point boundary value problems for second order differential equations with deviating arguments, \textit{J. Math. Anal. Appl.}, \textbf{312} (2005), 620--636.

\bibitem{Jan3} T. Jankowski, Multiple solutions for a class of boundary-value problems with deviating arguments and integral boundary conditions, \textit{Dynam. Systems Appl.}, \textbf{19} (2010), 179--188.

\bibitem{Jan4} T. Jankowski, 
Nonnegative solutions to nonlocal boundary value problems for systems of second-order differential equations dependent on the first-order derivatives, \textit{Nonlinear Anal.}, \textbf{87}  (2013), 83--101.

\bibitem{Jan5} T. Jankowski, 
Positive solutions to second-order differential equations with dependence on the first-order derivative and nonlocal boundary conditions, \textit{Bound. Value Probl.}, \textbf{2013:8} (2013).

\bibitem{Jan6} T. Jankowski, 
Positive solutions to Sturm-Liouville problems with nonlocal boundary conditions, \textit{Proc. Roy. Soc. Edinburgh Sect. A}, \textbf{144A} (2014), 119--138.

\bibitem{Jan2} T. Jankowski and W. Szatanik, Second-order differential equations with deviating arguments, \textit{Bound. Value Probl.} 2006, Art. ID 23092, 15 pp.

\bibitem{kttmna}
G. L. Karakostas and P. Ch. Tsamatos, Existence of multiple positive solutions for a nonlocal boundary value problem, 
\textit{Topol. Methods Nonlinear Anal.}, \textbf{19} (2002), 109--121.

\bibitem{ktejde} 
G. L. Karakostas and P. Ch. Tsamatos, Multiple positive solutions of some Fredholm integral equations arisen from nonlocal boundary-value problems, \textit{Electron. J. Differential Equations}, \textbf{2002}, No. 30, 17 pp.


\bibitem{Kar-Pal}
I. Karatsompanis and  P. K. Palamides, Polynomial approximation to a non-local boundary value problem, \textit{Comput. Math. Appl.}, \textbf{60} (2010), 3058--3071.

\bibitem{krzab} M. A. Krasnosel'ski\u\i{} and P. P. Zabre\u{\i}ko,
{\it Geometrical methods of nonlinear analysis}, Springer-Verlag,
Berlin, (1984).

\bibitem{kljdeds} K. Q. Lan,
Multiple positive solutions of Hammerstein integral equations with
singularities, \textit{Differential Equations and Dynamical
Systems}, \textbf{8} (2000), 175--195.

\bibitem{Ma} T. F. Ma, E. S. Miranda and M. B. de Souza Cortes,  A nonlinear differential equation involving reflection of the argument, \textit{Arch. Math. (Brno)}, \textbf{40} (2004),  63--68. 

\bibitem {rma} 
R. Ma, A survey on nonlocal boundary value problems, \textit{Appl. Math. E-Notes}, \textbf{7} (2001), 257--279.

\bibitem{martin} R. H. Martin, \textit{Nonlinear operators and differential
equations in Banach spaces}, Wiley, New York, (1976).

\bibitem{nietopim} J. J. Nieto and J. Pimentel, Positive solutions of a fractional thermostat model, \textit{Bound. Value Probl.}, \textbf{2013:5} (2013).

\bibitem{sotiris} 
S. K. Ntouyas, Nonlocal initial and boundary value problems: a survey, \textit{Handbook of differential equations: ordinary differential
equations. Vol. II}, 461--557, Elsevier B. V., Amsterdam, 2005.

\bibitem{paola}  P. Pietramala, A note on a beam equation with nonlinear boundary conditions, \textit{Bound. Value Probl.}, 
(2011), Art. ID 376782, 14 pp.

\bibitem{Picone} M. Picone, Su un problema al contorno nelle equazioni differenziali lineari ordinarie del secondo ordine, 
\textit{Ann. Scuola Norm. Sup. Pisa Cl. Sci.}, \textbf{10} (1908), 1--95. 

\bibitem{Ore} D. O'Regan, Existence results for differential equations with reflection of the argument, \textit{J. Austral. Math. Soc. Ser. A}, \textbf{57} (1994), 237--260. 

\bibitem{Ore2}  D. O'Regan and M. Zima, Leggett-Williams norm-type fixed point theorems for multivalued mappings, \textit{Appl. Math. Comput.}, \textbf{187} (2007),  1238--1249. 

\bibitem{pp-gi-pp-aml-06}
P. K. Palamides, G. Infante and P. Pietramala, Nontrivial solutions of a nonlinear heat flow problem via Sperner's lemma, \textit{Appl. Math. Lett.}, \textbf{22} (2009), 1444--1450.

\bibitem{Pia} D. Piao, Pseudo almost periodic solutions for differential equations involving reflection of the argument, \textit{J. Korean Math. Soc.}, \textbf{41} (2004), 747--754. 

\bibitem{Pia2} D. Piao, Periodic and almost periodic solutions for differential equations with reflection of the argument, \textit{Nonlinear Anal.}, \textbf{57} (2004), 633--637. 

\bibitem{Pia3} D. Piao, Bounded and almost periodic solutions for second order differential equation involving reflection of the argument, arXiv:1302.0616.

\bibitem{Stik} A. \v{S}tikonas, A survey on stationary problems, Green's functions and spectrum of Sturm-Liouville problem
with nonlocal boundary conditions, 
\textit{Nonlinear Anal. Model. Control}, \textbf{19} (2014), 301--334. 

\bibitem{Sza1} W. Szatanik, Quasi-solutions for generalized second order differential equations with deviating arguments, \textit{J. Comput. Appl. Math.}, \textbf{216} (2008), 425--434.

\bibitem{Sza2} W. Szatanik, Minimal and maximal solutions for integral boundary value problems for the second order differential equations with deviating arguments, \textit{Dynam. Systems Appl.}, \textbf{19} (2010), 87--96.

\bibitem{Tor} P. J. Torres, Existence of one-signed periodic solutions of some second-order differential equations via a Krasnoselskii fixed point theorem, \textit{J. Differential Equations}, \textbf{190} (2003), 643--662.

\bibitem{Uji} K. Ujihara, Reflectivity of metals at high temperatures, \textit{J. Appl. Phys.}, \textbf{43} (1972), 2376--2383.

\bibitem{jwpomona} J. R. L. Webb, Multiple positive solutions of some
nonlinear heat flow problems, \textit{Discrete Contin. Dyn. Syst.}, 
\textbf{suppl.} (2005), 895--903.

\bibitem{jwwcna04} J. R. L. Webb, Optimal constants in a nonlocal boundary
value problem, \textit{Nonlinear Anal.}, \textbf{63} (2005), 672--685.

\bibitem{jw-narwa} J. R. L. Webb, Existence of positive solutions for a thermostat model, \textit{Nonlinear Anal. Real World Appl.}, \textbf{13} (2012), 923--938

\bibitem{jwgi-lms} 
J. R. L. Webb and G. Infante,
Positive solutions of nonlocal boundary value problems: a unified approach, \textit{J. London Math. Soc.}, \textbf{74} (2006), 673--693.

\bibitem{jwgi-nodea-08} J. R. L. Webb and G. Infante, Positive solutions
of nonlocal boundary value problems involving integral conditions,
\textit{NoDEA Nonlinear Differential Equations Appl.},  \textbf{15} (2008), 45--67.

\bibitem{jwgi-lms-II} 
J. R. L. Webb and G. Infante,
Nonlocal boundary value problems of arbitrary order, \textit{J. London Math. Soc.}, \textbf{79} (2009), 238--258.

\bibitem{jwmz-na} J. R. L. Webb and M. Zima, Multiple positive solutions of resonant
and non-resonant nonlocal boundary value problems, \textit{Nonlinear
Anal.}, \textbf{71} (2009), 1369--1378.

\bibitem{Whyburn} 
W. M. Whyburn, Differential equations with general boundary conditions, \textit{Bull. Amer. Math. Soc.}, \textbf{48} (1942), 692--704. 

\bibitem{Wie1} J.  Wiener and A. R. Aftabizadeh, Boundary value problems for differential equations with reflection of the argument, \textit{Internat. J. Math. Math. Sci.}, \textbf{8} (1985), 151--163. 

\end{thebibliography}
\end{document}